\long\def\delete#1{}
\newtheorem{theorem}{Theorem}
\newtheorem{lemma}{Lemma}
\newtheorem{claim}{Claim}
\title{{\bf  Vertices in all minimum paired-dominating sets\\ of block graphs}
\thanks{Supported in part by National Natural
Science Foundation of China (Nos. 60673048 and 10471044 ) and
Shanghai Leading Academic Discipline Project (No. B407).}}
\author{{\bf  Lei Chen$^1$}
\  \  {\bf Changhong Lu$^{2}$}\footnote{Correspond author. E-mail:
chlu@math.ecnu.edu.cn}\ \ \ \ {\bf Zhenbing Zeng$^1$}\\
      \\ 
       $^1$ Shanghai Key Laboratory of Trustworthy Computing, \\
          East China Normal University, Shanghai, 200062, China \\ 
            \\
      $^2$ Department of Mathematics, \\
       East China Normal University,  Shanghai, 200241, China
       }
\date{}
\begin{document}
\openup 1.0\jot \maketitle

\begin{quote} {\bf Abstract}~Let $G=(V,E)$ be a simple graph without
isolated vertices. A set $S\subseteq V$ is a paired-dominating set
if every vertex in $V-S$ has at least one neighbor in $S$ and the
subgraph induced by $S$ contains a perfect matching. In this paper,
we present a linear-time algorithm to determine whether a given
vertex in a block graph is contained in all its  minimum
paired-dominating sets.

\textbf{Keywords:}~Algorithm; Block graph; Domination; Paired-domination; Tree.\\
 \textbf{2000 Mathematics Subject
Classification: 05C69; 05C85; 68R10}
\end{quote}

\section{Introduction}
Let $G=(V,E)$ be a simple graph without isolated vertices. The
distance between $u$ and $v$ in $G$, denoted by $d_G(u,v)$, is the
minimum length of a path between $u$ and $v$ in $G$. For a vertex
$v\in V$, the {\sl neighborhood} of $v$ in $G$ is defined as
$N_G(v)=\{u\in V~|~uv\in E\}$ and the {\sl closed neighborhood} is
defined as $N_G[v]=N_G(v)\cup \{v\}$. The {\sl degree} of $v$,
denoted by $d_G(v)$, is defined as $|N_G(v)|$. We use $d(u,v)$ for
$d_G(u,v)$, $N(v)$ for $N_G(v)$, $N[v]$ for $N_G[v]$ and $d(v)$ for
$d_G(v)$ if there is no ambiguity. For a subset $S$ of $V$, the
subgraph of $G$ induced by the vertices in $S$ is denoted by $G[S]$
and $G-S$ denote the subgraph induced by $V-S$. A {\sl matching} in
a graph $G$ is a set of pairwise nonadjacent edges in $G$. A {\sl
perfect matching} $M$ in $G$ is a matching such that every vertex of
$G$ is incident to an edge of $M$. Some other notations and
terminology not introduced  in here can be found in \cite{West}.

Domination and its variations in graphs have been extensively
studied \cite{ht1,ht2}. A set $S\subseteq V$ is a {\sl
paired-dominating set} of $G$, denoted PDS, if every vertex in $V-S$
has at least one neighbor in $S$ and the induced subgraph $G[S]$ has
a perfect matching $M$. Two vertices joined by an edge of $M$ are
said to be paired in $S$. The {\sl paired-domination number},
denoted by $\gamma_{pr}(G)$, is the minimum cardinality of a PDS. A
paired-dominating set of cardinality $\gamma_{pr}(G)$ is called a
$\gamma_{pr}(G)$-set. The paired-domination was introduced by Haynes
and Slater \cite{hs1,hs}. There are many results on this problem
\cite{he,kang2,kang3,pm,sy,chen}.

The study of characterizing vertices contained in all various kinds
of minimum dominating set, such as dominating set, total dominating
set and paired-dominating set, has received considerable attention
(see \cite{cm},\cite{em}, \cite{mm}). 
Those results are all restricted in trees. In this paper, we will
extend the result in \cite{mm}  to block graphs, which contain trees
as its subclass. In fact, we give  a linear-time algorithm to
determine whether a given vertex in a block graph is contained in
all its minimum paired-dominating sets. If changing the pruning
rules and judgement rules in our algorithm, our method is also
available to determine whether a given vertex is contained in all
minimum (total) dominating sets of a block graph .

\section{Pruning block graphs}
Let $G=(V,E)$ be a simple graph. A vertex $v$ is a {\sl cut-vertex}
if deleting $v$ and all edges incident to it increases the number of
connected components. A {\sl block} of $G$ is a maximal connected
subgraph of $G$ without cut-vertices.    A {\sl block graph} is a
connected graph whose blocks are complete graphs. If every block is
$K_2$, then it is a tree.

Let $G=(V,E)$ be a block graph. As we know, every block graph not
isomorphic to complete graph has at least two {\sl end blocks},
which are blocks with only one cut-vertex. A vertex in $G$ is a {\sl
leaf} if its degree is one. If a vertex is adjacent to a leaf, then
we call it a {\sl support vertex}.

\begin{lemma}\label{lem1}\cite{mm}
Let $T$ be a tree of order at least three. If   $u$ is a leaf in
$T$, then there exists a $\gamma_{pr}(T)$-set not containing $u$.
\end{lemma}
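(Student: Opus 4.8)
The plan is to start from an arbitrary $\gamma_{pr}(T)$-set $S$ and, if $u \in S$, to transform it into another minimum PDS of the same cardinality that avoids $u$. First I would observe that if $u \notin S$ we are already done, so assume $u \in S$. Let $v$ be the unique neighbor of the leaf $u$ (its support vertex). Since $G[S]$ admits a perfect matching $M$ and the only possible partner of $u$ inside $G[S]$ is $v$, the vertex $u$ must be matched to $v$; in particular $v \in S$. Because $T$ is connected of order at least three, $v$ cannot itself be a leaf, so $v$ has a second neighbor $w \neq u$. The argument then splits according to whether $w$ already lies in $S$.

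In the easy case $w \notin S$, I would simply set $S' = (S \setminus \{u\}) \cup \{w\}$ and replace the matching edge $uv$ by $vw$. Since $vw \in E(T)$, this is still a perfect matching of $G[S']$, and $|S'| = |S|$. For domination, deleting $u$ can only endanger vertices whose sole neighbor in $S$ was $u$; but the only neighbor of $u$ is $v$, which remains in $S'$, while $u$ itself is now dominated by $v$. Hence $S'$ is a $\gamma_{pr}(T)$-set with $u \notin S'$.

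The harder case, which I expect to be the main obstacle, is $w \in S$. Here $w$ is already matched by $M$ to some vertex $w'$ (with $w' \neq v$, since $v$ is matched to $u$), and I can no longer simply drop $u$, because that would leave an odd, hence unmatchable, set. The key sub-step is to re-pair $v$ with $w$, which frees $w'$, and then to show that $w'$ has a neighbor outside $S$. This I would establish by minimality: if every neighbor of $w'$ lay in $S$, then no vertex would be privately dominated by $w'$, and $S \setminus \{u, w'\}$ --- with $M$ modified by deleting $uv$ and $ww'$ and inserting $vw$ --- would be a PDS of cardinality $|S| - 2$, contradicting the minimality of $S$. Granting a neighbor $y \in N(w') \setminus S$, I would take $S' = (S \setminus \{u\}) \cup \{y\}$, with the matching edges $vw$ and $w'y$ replacing $uv$ and $ww'$. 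One checks exactly as before that $S'$ dominates $T$, that the two new edges are genuine and yield a perfect matching of $G[S']$, and that $|S'| = |S|$, so $S'$ is again a $\gamma_{pr}(T)$-set avoiding $u$. Combining the two cases yields the claim; the only delicate points are the parity obstruction and the minimality argument producing the vertex $y$ in the second case.
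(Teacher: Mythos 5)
Your argument is correct and complete: the reduction to the case $u\in S$, the forced pairing of $u$ with its support vertex $v$, the easy swap when $v$ has a neighbor $w\notin S$, and in the case $w\in S$ the re-pairing of $v$ with $w$ together with the minimality argument that forces the freed vertex $w'$ to have a neighbor $y\notin S$ all check out (in particular $y\neq u,v,w$ since $y\notin S$, so the new matching edges $vw$ and $w'y$ are legitimate). Note that the paper itself does not prove this lemma --- it is quoted from the Henning--Plummer reference \cite{mm} without proof --- so there is no in-paper argument to compare against; your exchange argument is the standard one for such statements and is essentially what the cited source does.
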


For block graphs, we have the following generalized result. The
proof is almost same as that of  Lemma \ref{lem1}, so it is omitted.

\begin{lemma}\label{lem2}
Let $G$ be a block graph of order at least three. If  $u$ is not a
cut-vertex of $G$, then there exists a $\gamma_{pr}(G)$-set not
containing $u$.
\end{lemma}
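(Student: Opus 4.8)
The plan is to start from an arbitrary minimum paired-dominating set $S$ and, if $u\in S$, surgically replace $u$ by a nearby vertex so that both the domination property and the perfect matching survive while the cardinality is unchanged. First I would record the structural facts forced by the hypothesis: since $u$ is not a cut-vertex it lies in a unique block $B$, which is a clique, and $N(u)=V(B)\setminus\{u\}$. If $u\notin S$ there is nothing to prove, so assume $u\in S$. Because $G[S]$ carries a perfect matching $M$, the vertex $u$ is matched by $M$ to some neighbour $w$, and $w\in V(B)$. The crucial remark is that $w$, lying in the clique $B$, dominates every vertex of $B$; hence deleting $u$ from $S$ costs nothing for domination, because every vertex previously dominated only by $u$ (necessarily a vertex of $B$) is still dominated by $w$, and $u$ itself is dominated by $w$.

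The engine of the proof is a single swap. Removing $u$ leaves $w$ unmatched and changes the parity of $|S|$, so to keep the cardinality at $\gamma_{pr}(G)$ I must simultaneously insert one new vertex and re-match $w$ to it. In the main case there is a vertex $z\in N(w)\setminus S$; then I would set $S'=(S\setminus\{u\})\cup\{z\}$ and $M'=(M\setminus\{uw\})\cup\{wz\}$. Here $M'$ is a perfect matching of $G[S']$ because $wz\in E$, the set $S'$ still dominates $G$ by the remark above (adding $z$ only helps), and $|S'|=|S|=\gamma_{pr}(G)$; thus $S'$ is a minimum paired-dominating set avoiding $u$, as required. This is precisely the move used for a leaf in the proof of Lemma \ref{lem1}, now carried out inside the block $B$ rather than along a single pendant edge.

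The hard part will be the degenerate case in which no such $z$ exists, i.e.\ $N(w)\subseteq S$; this forces the whole block $B$ into $S$ and means the re-matching cannot be done locally at $w$. Here I would exploit the clique structure: when $|V(B)|\ge 3$ there is a further vertex of $B$ already in $S$, and I would reroute the matching inside $B$ so that the freed partner is pushed to a vertex that does admit a neighbour outside $S$, using that $G$ is connected of order at least three to locate such a vertex (so that $S\ne V$) and to build the required alternating re-pairing. When $|V(B)|=2$ the situation collapses to the pendant-edge configuration of a leaf and is disposed of exactly as in Lemma \ref{lem1}. I expect the bookkeeping of this re-pairing---choosing the vertex to add and checking that the resulting matching is still perfect while domination is preserved---to be the only genuinely delicate point; everything else is the direct block-graph translation of the tree argument, which is why the statement can reasonably be quoted as having essentially the same proof as Lemma \ref{lem1}.
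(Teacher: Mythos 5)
The paper itself omits the proof of this lemma (it is declared ``almost the same'' as that of Lemma \ref{lem1}, which is only quoted from the literature), so there is nothing to compare against line by line; judged on its own, your main swap is sound, but the case you defer is exactly where the work lies, and your plan for it does not go through as stated. When $N(w)\subseteq S$ and $|V(B)|\ge 3$, you propose to ``reroute the matching inside $B$''; but all of $N(w)$, and hence all of $B$, already lies in $S$, so no rerouting confined to $B$ can produce the vertex outside $S$ that you need to restore parity. What actually happens in this subcase is that $S$ cannot be minimum: pick $x\in V(B)\setminus\{u,w\}$; then $S\setminus\{u,w\}$ with matching $M\setminus\{uw\}$ is a PDS of $G$ (both $u$ and $w$ are dominated by $x$, and any $v\notin S$ whose only neighbours in $S$ were $u$ or $w$ would lie in $N(w)\subseteq S$, a contradiction), and it is two vertices smaller. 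So this subcase simply does not occur, and the correct move is a minimality contradiction, not a re-pairing.

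When $|V(B)|=2$, i.e.\ $u$ is a leaf, you say the configuration ``is disposed of exactly as in Lemma \ref{lem1}''; that is not available to you, since Lemma \ref{lem1} is stated only for trees and its proof is not reproduced here, while $G$ is a general block graph. Moreover this subcase genuinely occurs: for the spider with centre $w$, pendant leaf $u$, and two legs $w x_j y_j z_j$ of length three, the set $\{u,w,x_1,y_1,x_2,y_2\}$ is a minimum PDS containing $u$ with $N(w)\subseteq S$. It needs one more level of alternating path: take $x\in N(w)\setminus\{u\}$ with $M$-partner $y$ and re-match $w$ with $x$; if some $z\in N(y)\setminus S$ exists, then $(S\setminus\{u\})\cup\{z\}$ with matching $(M\setminus\{uw,xy\})\cup\{wx,yz\}$ is the desired $\gamma_{pr}(G)$-set avoiding $u$, while if $N(y)\subseteq S$ then $S\setminus\{u,y\}$ with matching $(M\setminus\{uw,xy\})\cup\{wx\}$ is a smaller PDS, a contradiction. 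Until you write out this second step (or an equivalent termination argument), the proof has a hole precisely at the point you yourself flag as ``the only genuinely delicate point''.
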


If $G$ is a block graph with order two, then  every vertex is
contained in the only minimum paired-dominating set.  If $G$ is a
complete graph with order at least three, no vertex of $G$ is
contained in all minimum paired-dominating sets.  Thus, in here, we
assume that the block graph $G$ with at least one cut-vertex. Let
$r$ be the given vertex in $G$ and we want to determine whether $r$
is contained in every $\gamma_{pr}(G)$-set. By Lemma \ref{lem2}, it
is enough to assume that $r$ is a cut-vertex of $G$.

Our idea is to prune the original graph $G$ into a small block graph
$\tilde{G}$ such that the given vertex $r$ is contained in all
minimum paired-dominating sets of $G$ if and only if it is contained
in all minimum paired-dominating sets of $\tilde{G}$. To do this, we
first need a vertex ordering and follow this ordering we can prune
the original graph.  For a vertex $v\in V(G)$ and a block $B$, {\sl
the distance of $v$ and $B$}, denoted by $d(v,B)$, is defined as the
maximum of $d(u,v)$ for $u\in V(B)$. We say a block $B$ is {\sl
farthest from $v$} if $d(v,B)$ is maximum over all blocks. Note that
$B$ is an end block if $B$ is farthest from $r$. To find the vertex
ordering, in here, we need to define a vertex ordering connected
operation. Let $S=x_1,x_2,\cdots, x_s$ be a vertex ordering and
$T=u_1,u_2,\cdots,u_t$ be another vertex ordering. We use $S+T$ to
denote a new vertex ordering
$x_1,x_2,\cdots,x_s,u_1,u_2,\cdots,u_t$. Beginning with a block
farthest from $r$ and working recursively inward, we can find a
vertex order $v_1,v_2,\cdots,v_n$ as follows.

\vspace{4mm}

\noindent {\bf Procedure VO}\\
$S=\emptyset$; ($S$ is a vertex ordering.)\\
Let $r$ be a cut-vertex of $G$;\\
While ($G\not=\emptyset$) do\\
\hspace*{4mm} If ($G$ is a complete graph) then\\
\hspace*{10mm}Let $V(G)=\{u_1,u_2,\cdots,u_{a}=r\}$.
 $S=S+u_1,u_2,\cdots,u_{a}$; \\
\hspace*{10mm}$G=G-\{u_1,u_2,\cdots,u_{a}\}$;\\
\hspace*{4mm} else\\
 \hspace*{10mm} Let $B$ be an end block farthest from $r$ with
 $V(B)=\{u_1,u_2,\cdots,u_b,x\}$, where $x$ \\
  \hspace*{10mm} is the cut-vertex in $B$. $S=S+u_1,u_2,\cdots,u_b$;\\
 \hspace*{10mm} $G=G-\{u_1,u_2,\cdots,u_b\}$;\\
\hspace*{4mm} endif\\
 enddo\\
 Output $S$.

\vspace{4mm}

Let $v_1,v_2,\cdots,v_n=r$ be the vertex ordering of a block graph
$G$ which is obtained by procedure VO. We define the following
notations:\\
$(a)$ $F_G(v_i)=v_j$, $j=\max\{k~|~v_iv_k\in E,~k>i\}$. $v_j$ is
called the  {\sl father}  of $v_i$ and $v_i$ is a {\sl child} of
$v_j$. Obviously, $v_j$ must be a cut-vertex in $G$. We use $F(v_i)$ for $F_G(v_i)$ if there is no ambiguity.\\
$(b)$ $C_G(v_i)=\{v_j~|~F_G(v_j)=v_i\}$.  \\
$(c)$ For a block graph $G$, we define a rooted tree $T(G)$, whose
vertex set is $V(G)$, and $uv$ is an edge of $T(G)$ if and only if
$F_G(u)=v$. The root of $T(G)$ is $r$. Moreover let $T_{v}$ be a
subtree of $T(G)$ rooted at $v$. Every vertex in $T_{v}$ except $v$
is a descendant of $v$. For a vertex $v\in V(G)$, $D_G(v)$ denotes
the vertex set consisting of the descendants of $v$ in $T(G)$ and
$D_G[v]=D_G(v)\cup\{v\}$. That is, $D_G[v]=V(T_{v})$.

Except the vertex ordering, we also need a labeling function
$l(v)~:~V\rightarrow~\{\emptyset,r_1,r_2\}$ of each vertex $v$ to
help us to determine which vertices can be pruned. At first,
$l(v)=\emptyset$ for every vertex $v\in V$.

The following procedure can prune a big block graph $G$ into a small
block graph $\tilde{G}$ such that $r$ is contained in all minimum
paired-dominating sets of $G$ if and only if $r$ is contained in all
minimum paired-dominating sets of $\tilde{G}$.

\vspace{4mm}

\noindent {\bf Procedure PRUNE.} Prune a given block graph into a
small block graph.\\
{\bf Input} A block graph with at least one cut-vertex and
a vertex ordering $v_1,v_2,\cdots,v_n$ obtained by procedure VO.
For every vertex $v$, $l(v)=\emptyset$.\\
{\bf Output} A smaller block graph.\\
{\bf Method}\\
\hspace*{4mm}  $S=\emptyset$;\\
\hspace*{4mm}  For $i=1$ to $n-1$ do\\
\hspace*{12mm} If ($v_i\not\in S$) then\\
\hspace*{20mm} If ($l(v_i)=\emptyset$ and there is no child $v$ such
that $l(v)=r_1$ or $l(v)=r_2$) then\\
\hspace*{28mm} $l(F(v_i))=r_1$;\\
\hspace*{20mm} else if ($v_i$ satisfies the conditions of Lemma
\ref{lem3} or Lemma \ref{lem6} or Lemma \ref{lem7}) then\\
\hspace*{28mm} $G=G-D_G[v_i]$;\\
\hspace*{28mm} If ($d(v_i)=2$ and $|V(B_1)|=|V(B_2)|=2$ and
$C_G(F(v_i))=\{v_i\}$) then\\
\hspace*{28mm} (Where $B_1$ and $B_2$ are same as those in
Lemma \ref{lem6})\\
\hspace*{36mm} $S=S\cup \{F(v_i)\}$;\\
\hspace*{28mm} endif\\
\hspace*{20mm} else if ($v_i$ satisfies the conditions of Lemma
\ref{lem8}) then\\
\hspace*{28mm} $G=G-(D_G(v_i)-V(B'))$, where $B'$ is same as $B'$ in
Lemma \ref{lem8}.\\
\hspace*{20mm} else if ($v_i$ satisfies the conditions of Lemma
\ref{lem11} or Lemma \ref{lem13}) then\\
\hspace*{28mm} $G=G-(D_G(v_i)-D_G[u])$, where $u$ is same as $u$ in
Lemma \ref{lem11} and Lemma \ref{lem13}.\\
\hspace*{28mm} $l(v_i)=l(u)=r_2$;\hspace*{10mm} (*)\\
\hspace*{28mm} If ($d(v_i,r)=2$ and $|V(B_1)|=|V(B_2)|=2$ and
$C_G(F(v_i))=\{v_i\}$) then\\
\hspace*{28mm} (Where $B_1$ and $B_2$ are same as those in
Lemma \ref{lem13})\\
\hspace*{36mm} $S=S\cup \{F(v_i)\}$;\\
\hspace*{28mm} endif\\
\hspace*{20mm} endif\\
\hspace*{12mm} endif\\
\hspace*{4mm}  endfor\\
\hspace*{4mm}  Output $G$.

\vspace{4mm}

Next, we will prove the correctness of procedure PRUNE. Let $G_i$ be
a subgraph of the original graph $G$ after $v_i$ is considered and
$G_0=G$. It is clear that $G_i$ is a block graph for every $1\le
i\le n-1$. We define that $C_i(v)=C_G(v)\cap V(G_i)$,
$D_i(v)=D_G(v)\cap V(G_i)$ and $D_i[v]=D_G[v]\cap V(G_i)$ for $0\le
i\le n-1$. Note that at the $i$-th loop, the pruning vertices, for
example say $D_G[v_i]$, are $D_{i-1}[v_i]$ as $G$ is updated at each
step, i.e., $G=G_{i-1}$ at this time. It is enough to prove that $r$
is contained in all $\gamma_{pr}(G_{i-1})$-set if and only if $r$ is
contained in all $\gamma_{pr}(G_{i})$-set for $1\le i\le n-1$. If
$G_i=G_{i-1}$ for some $i$, then it is obviously true. When $v_i$ is
considered, let $R_j=\{v~|~v\in V(G_{i-1})~and~l(v)=r_j\}$ for
$j=1,2$.

\begin{lemma}\label{lem3}
When $v_i$ is a considering vertex such that $d(r,v_i)\ge 3$. If
$l(v_i)=\emptyset$, $(R_1\cup R_2)\cap C_{i-1}(v_i)\not=\emptyset$
and $G_{i-1}[R_1\cap C_{i-1}(v_i)]$ has a perfect matching, then $r$
is contained in all $\gamma_{pr}(G_{i-1})$-set if and only if $r$ is
contained in all $\gamma_{pr}(G_{i})$-set, where
$G_{i}=G_{i-1}-D_{i-1}[v_i]$.
\end{lemma}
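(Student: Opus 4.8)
The plan is to show that the pruned subtree on $D_{i-1}[v_i]$ contributes a \emph{fixed} amount to the paired-domination number, realized by a canonical partial solution that avoids $v_i$, and then to set up a cardinality-preserving correspondence between the minimum paired-dominating sets of $G_{i-1}$ and those of $G_i=G_{i-1}-D_{i-1}[v_i]$. First I would pin down the meaning of the labels: a child of $v_i$ carrying label $r_1$ is a support-type vertex lying in \emph{every} paired-dominating set of $G_{i-1}$ (it must dominate a pendant-type descendant, and by Lemma~\ref{lem2} that descendant may be taken outside the set), while a child carrying label $r_2$ already lies in the set together with a partner inside its own subtree. Using the hypothesis that $G_{i-1}[R_1\cap C_{i-1}(v_i)]$ has a perfect matching, I would define a canonical partial solution $C\subseteq D_{i-1}[v_i]$ consisting of all forced vertices of the subtree, with the $r_1$-children paired off among themselves along this matching and the $r_2$-subtrees paired internally as they already are; crucially $v_i\notin C$, yet some labeled child in $C$ dominates $v_i$ because $(R_1\cup R_2)\cap C_{i-1}(v_i)\neq\emptyset$. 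Setting $p=|C|$, one checks directly that $G_{i-1}[C]$ has a perfect matching and that $C$ dominates every vertex of $D_{i-1}[v_i]$, including $v_i$.

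The easy inequality is $\gamma_{pr}(G_{i-1})\le\gamma_{pr}(G_i)+p$. Given any $\gamma_{pr}(G_i)$-set $D'$, I would argue that $D'\cup C$ is a paired-dominating set of $G_{i-1}$: every vertex of $V(G_i)$ keeps a dominator from $D'$, every vertex of $D_{i-1}[v_i]$ (in particular $v_i$) is dominated by $C$, and since $C\subseteq D_{i-1}(v_i)$ meets $V(G_i)$ in no vertex, the union of the perfect matching of $D'$ with that of $C$ is a perfect matching of $G_{i-1}[D'\cup C]$.

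For the reverse inequality together with the normalization I would start from an arbitrary $\gamma_{pr}(G_{i-1})$-set $D$ and transform it into a set $D^{*}$ with $|D^{*}|=|D|$, with $r\in D^{*}$ exactly when $r\in D$, with $v_i\notin D^{*}$, and with $D^{*}\cap D_{i-1}[v_i]=C$. Since all $r_1$- and $r_2$-children already lie in $D$, the only freedom is in how they are matched and whether $v_i$ is used, so the heart of the matter is an exchange step showing that $v_i$ may be discarded: if $v_i\in D$ then its partner is a child $c$ in the subtree (it cannot be $F(v_i)$ without leaving some forced $r_1$-child short of a partner, which would contradict minimality), and re-pairing the $r_1$-children along the guaranteed perfect matching frees $v_i$ while the cardinality never increases. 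The \textbf{main obstacle} I anticipate is the sub-case in which $v_i\in D$ is the \emph{only} dominator of $F(v_i)$; here one must show, using $d(r,v_i)\ge 3$ so that $F(v_i)$ is an internal cut-vertex with its own descending structure, that $F(v_i)$ can instead be dominated within $G_i$ at no extra cost, so that deleting $v_i$ and passing to $C$ is free. Granting this, $D^{*}\cap V(G_i)$ is a paired-dominating set of $G_i$ of size $|D|-p$, which forces $\gamma_{pr}(G_{i-1})=\gamma_{pr}(G_i)+p$ and makes $D\mapsto D^{*}\cap V(G_i)$ and $D'\mapsto D'\cup C$ mutually inverse, cardinality-preserving maps between the two families of minimum paired-dominating sets.

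To finish, I would note that $d(r,v_i)\ge 3$ gives $r\notin D_{i-1}[v_i]$, so both maps fix the membership of $r$: we have $r\in D$ if and only if $r\in D^{*}\cap V(G_i)$, and $r\in D'$ if and only if $r\in D'\cup C$. Consequently $r$ lies in every $\gamma_{pr}(G_{i-1})$-set precisely when it lies in every $\gamma_{pr}(G_i)$-set, which is exactly the assertion of the lemma.
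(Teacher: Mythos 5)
Your overall strategy is the same as the paper's: isolate the forced set $D=(R_1\cap C_{i-1}(v_i))\cup(R_2\cap D_{i-1}(v_i))$, prove $\gamma_{pr}(G_{i-1})=\gamma_{pr}(G_i)+|D|$, and normalize an arbitrary minimum paired-dominating set of $G_{i-1}$ so that its trace on $D_{i-1}[v_i]$ is exactly this forced set. But two steps as written do not hold up. First, your parenthetical claim that the partner of $v_i$ ``cannot be $F(v_i)$ without leaving some forced $r_1$-child short of a partner'' is false: the $r_1$-children can be matched among themselves (that is exactly what the perfect-matching hypothesis supplies), so in a minimum set $v_i$ may perfectly well be paired with $F(v_i)$ or with another vertex of the block above it. This is not an impossibility to be excluded but a case to be handled: one must find a neighbor $w\not\in S$ of that partner and exchange $v_i$ for $w$, arguing that such a $w$ exists since otherwise the set was not minimum. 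Your ``main obstacle'' about $F(v_i)$ losing its only dominator misdiagnoses where the work lies; domination of $F(v_i)$ is restored automatically by the vertices added in the block above $v_i$.

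Second, and more seriously, your justification that the normalization preserves the membership of $r$ --- ``$d(r,v_i)\ge 3$ gives $r\notin D_{i-1}[v_i]$'' --- only covers the vertices you \emph{delete}. Every exchange in the normalization also \emph{adds} vertices lying outside $D_{i-1}[v_i]$: a neighbor of $v_i$ in $G_i$, or $F(v_i)$ together with $F(F(v_i))$ when $v_i$ has a unique neighbor in $G_i$, or a neighbor of $v_i$'s partner. To conclude $r\in S$ from $r\in D^{*}$ you need every such added vertex to differ from $r$; since they all lie within distance $2$ of $v_i$, this is precisely where $d(r,v_i)\ge 3$ is genuinely used, and it is the only place it is needed. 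As written, your argument would go through verbatim for $d(r,v_i)=2$ or $1$, where the statement is false without the extra structural conditions of Lemmas \ref{lem6} and \ref{lem7}; that is the sign of a real gap. (The claim that your two maps are ``mutually inverse'' is also an overstatement --- the composite $S\mapsto D^{*}\cap V(G_i)\mapsto (D^{*}\cap V(G_i))\cup D$ returns $D^{*}$, not $S$ --- but the lemma needs no bijection, only that each map preserves non-membership of $r$ in the appropriate direction.)
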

\begin{proof}
Let $D_1=R_1\cap C_{i-1}(v_i)$, $D_2=R_2\cap D_{i-1}(v_i)$ and
$D=D_1\cup D_2$. In details, $D_1=\{u_1,u_2,\cdots,u_a\}$ and
$D_2=\{x_1,y_1,\cdots, x_b,y_b\}$, where $x_jy_j\in E$ and
$F(y_j)=x_j$ for $1\le j\le b$ for $1\le j\le b$ (see  the line
indicated (*) in the procedure PRUNE.) Then we obtain the following
claim.

\begin{claim}\label{claim4}
$\gamma_{pr}(G_{i-1})=\gamma_{pr}(G_{i})+|D|$.
\end{claim}
\begin{proof}
Any $\gamma_{pr}(G_{i})$-set can be extended to a PDS of $G_{i-1}$
by adding $D$. Thus $\gamma_{pr}(G_{i-1})\le
\gamma_{pr}(G_{i})+|D|$. For converse, let $S$ be a
$\gamma_{pr}(G_{i-1})$-set. If $y_j\not\in S$, then
$|D_{i-1}(y_j)\cap S|\ge 2$ and $S-D_{i-1}(y_j)\cup \{y_j,z_j\}$,
where $z_j$ is a child of $y_j$, is also a
$\gamma_{pr}(G_{i-1})$-set. Thus we may assume $y_j\in S$ and $w_j$
be its paired vertex. If $x_j\not\in S$, then $S-\{w_j\}\cup
\{x_j\}$ is also a $\gamma_{pr}(G_{i-1})$-set. If $x_j\in S$ and
$w_j\not=x_j$, let $x_j'$ is the paired vertex of $x_j$. Then
$x_j'=v_i$, otherwise $S-\{w_j,x_j'\}$ is a smaller PDS of
$G_{i-1}$. It is a contradiction. If $N(v_i)\subseteq S$, then
$S-\{v_i,w_j\}$ is a smaller PDS of $G_{i-1}$. Thus there is a
neighbor $v_i'$ of $v_i$ such that $v_i'\not\in S$. In this case,
$S-\{w_j\}\cup \{v_i'\}$ is also a $\gamma_{pr}(G_{i-1})$-set.
Therefore, we may assume that $D_2\subseteq S$ and every vertex in
$D_2$ is paired with another vertex in $D_2$.

With the similar argument, we may assume that $D_1\subseteq S$. Let
$u_j'$ is the paired vertex of $u_j$ and $CC=\{u_j'~|~u_j'\not\in
D_1\}$. If $CC=\emptyset$, then we do nothing. If $CC\not=\emptyset$
and $v_i\not\in CC$, then $S-CC$ is a smaller PDS of $G_{i-1}$, a
contradiction. Thus we assume $v_i\in CC$ and it is paired with
$u_1$. Since $G_{i-1}[D_1]$ has a perfect matching, there must be a
vertex in $D_1$, say $u_2$, such that $u_2'\in CC$. If
$N(v_i)\subseteq S$, then $S-\{u_2',v_i\}$ is a smaller PDS of
$G_{i-1}$. Thus there exists a neighbor $v_i'$ of $v_i$ such that
$v_i'\not\in S$. In this case, $S-CC\cup \{v_i,v_i'\}$ is a
$\gamma_{pr}(G_{i-1})$-set. Up to now, we may assume that
$D\subseteq S$ and every vertex in $D$ is paired with another vertex
in $D$.

If $v_i\not\in S$, then $S-D$ is a PDS of $G_i$. Thus
$\gamma_{pr}(G_i)\le |S|-|D|=\gamma_{pr}(G_{i-1})-|D|$. Therefore
$\gamma_{pr}(G_{i-1})=\gamma_{pr}(G_i)+|D|$. If $v_i\in S$, let
$v_i'$ be its paired vertex. If $v_i'\in C_{i-1}(v_i)$, then there
exists a neighbor $v_i''$ of $v_i$ such that $v_i''\not\in S\cup
C_{i-1}(v_i)$. Otherwise $S-\{v_i,v_i'\}$ is a smaller PDS of
$G_{i-1}$. Thus $S-\{v_i'\}\cup \{v_i''\}$ is a
$\gamma_{pr}(G_{i-1})$-set. So we assume that $v_i'\not\in
C_{i-1}(v_i)$. If $N(v_i')\subseteq S$, then $S-\{v_i,v_i'\}$ is a
smaller PDS of $G_{i-1}$. Thus there is a neighbor $v_i''$ of $v_i'$
such that $v_i''\not\in S$, in this case, $S-\{v_i\}\cup \{v_i''\}$
is a $\gamma_{pr}(G_{i-1})$-set not containing $v_i$. We may assume
$S$ is such a $\gamma_{pr}(G_{i-1})$-set. Then $S-D$ is a PDS of
$G_i$. Thus $\gamma_{pr}(G_i)\le |S|-|D|=\gamma_{pr}(G_{i-1})-|D|$.
Therefore $\gamma_{pr}(G_{i-1})=\gamma_{pr}(G_i)+|D|$.~~~$\Box$
\end{proof}\\

If there is a $\gamma_{pr}(G_i)$-set $S'$ such that $r\not\in S'$,
then let $S=S'\cup D$. By Claim \ref{claim4}, $S$ is a
$\gamma_{pr}(G_{i-1})$-set and $r\not\in S$. Therefore, if $r$ is
contained in all $\gamma_{pr}(G_{i-1})$-set, then $r$ is contained
in all $\gamma_{pr}(G_{i})$-set.

For converse, let $S$ be an arbitrary $\gamma_{pr}(G_{i-1})$-set and
$PD=S\cap D_{i-1}[v_i]$.

\begin{claim}\label{claim5}
$|D|\le |PD|\le |D|+2$.
\end{claim}
\begin{proof}
It is obvious that $|PD|\ge |D|$. Next, we prove $|PD|\le |D|+2$.
Let $v_i'$ be the father of $v_i$, i.e., $F(v_i)=v_i'$, and $B$ is a
block of $G_{i-1}$ containing $v_i$ and $v_i'$. We discuss it
according to the order of $B$.

\vspace{2mm}

{\bf Case 1:} $V(B)=\{v_i,v_i'\}$\\
If $|PD|\ge |D|+4$ and $|PD|$ is even, then $v_i',v_i''\not\in S$,
where $v_i''$ is the father of $v_i'$. Otherwise, $S-PD\cup D$ is a
smaller PDS of $G_{i-1}$. However, $S-PD\cup D\cup \{v_i',v_i''\}$
is also a smaller PDS of $G_{i-1}$, a contradiction. If $|PD|\ge
|D|+3$ and $|PD|$ is odd, then $v_i$ and $v_i'$ are paired in $S$.
If $N(v_i')\subset S$, then $S-PD-\{v_i'\}\cup D$ is a smaller PDS
of $G_{i-1}$. Thus there is a neighbor $w$ of $v_i'$ such that
$w\not\in S$, then $S-PD\cup D\cup \{w\}$ is also a smaller PDS of
$G_{i-1}$. It is  a contradiction.

\vspace{2mm}

{\bf Case 2:} $V(B)\not=\{v_i,v_i'\}$\\
Let $w$ be another vertex in $V(B)$. If $|PD|\ge |D|+4$ and $|PD|$
is even, then $w,v_i'\not\in S$, then $S-PD\cup D\cup \{v_i',w\}$ is
a smaller PDS of $G_{i-1}$. If $|PD|\ge |D|+3$ and $|PD|$ is odd,
then $v_i\in S$. If $w$ is the paired vertex of $v_i$, then there
exists a neighbor $w'$ of $w$ such that $w'\not\in S$. However,
$S-PD\cup D\cup \{w'\}$ is a smaller PDS of $G_{i-1}$. It is a
contradiction. If $v_i'$ is the paired vertex of $v_i$, with the
same argument to Case 1, we can also get a contradiction.~~~$\Box$
\end{proof}\\

By Claim \ref{claim5}, we have $|D|\le |PD|\le |D|+2$. We discuss
the following cases according to the size of $PD$.

\vspace{2mm}

{\bf Case 1:} $|PD|=|D|+2$\\
In this case, $(N(v_i)\cap V(G_i))\cap S=\emptyset$. If $|N(v_i)\cap
V(G_i)|\ge 2$, then let $S'=S-PD\cup \{w',w''\}$, where $w',w''\in
N(v_i)\cap V(G_i)$. By claim \ref{claim4}, $S'$ is a
$\gamma_{pr}(G_i)$-set. Then $r\in S'$. Since $d(v_i,r)\ge 3$, then
$r\in S$. If $|N(v_i)\cap V(G_i)|=1$, then $F(v_i),F(F(v_i))\not\in
S$, then let $S'=S-PD\cup \{F(v_i),F(F(v_i))\}$. By Claim
\ref{claim4}, $S'$ is a $\gamma_{pr}(G_i)$-set. Then $r\in S'$.
Since $d(v_i,r)\ge 3$, then $r\in S$.

\vspace{2mm}

{\bf Case 2:} $|PD|=|D|+1$\\
In this case, $v_i\in S$, let $\tilde{v}$ be its paired vertex. If
$N(\tilde{v})\subseteq S$, then $S-PD-\{\tilde{v}\}\cup D$ is a
smaller PDS of $G_{i-1}$. Thus there is a neighbor $w$ of
$\tilde{v}$ such that $w\not\in S$. Let $S'=S-PD\cup \{w\}$. by
Claim \ref{claim4}, $S'$ is a $\gamma_{pr}(G_i)$-set. Then $r\in
S'$. Since $d(v_i,r)\ge 3$, then $r\in S$.

\vspace{2mm}

{\bf Case 3:} $|PD|=|D|$\\
In this case, let $S'=S-PD$. Then by Claim \ref{claim4}, $S'$ is a
$\gamma_{pr}(G_i)$-set. Then $r\in S'$. Thus $r\in S$.~~~$\Box$
\end{proof}

\begin{lemma}\label{lem6}
When $v_i$ is a considering vertex such that $d(r,v_i)=2$. Let $B_1$
be the block containing $v_i$ and $F(v_i)$, and let $B_2$ be the
block containing $F(v_i)$ and $r$. Suppose $l(v_i)=\emptyset$,
$(R_1\cup R_2)\cap C_{i-1}(v_i)\not=\emptyset$ and $G_{i-1}[R_1\cap
C_{i-1}(v_i)]$ has a perfect matching. If $G_{i-1}$ satisfies one of
the following
conditions:\\
$(1)$ $|V(B_1)|\ge 3$;\\
$(2)$ $|V(B_1)|=2$ and $C_{i-1}(F(v_i))\not=\{v_i\}$;\\
$(3)$ $|V(B_1)|=2$, $C_{i-1}(F(v_i))=\{v_i\}$ and $|V(B_2)|\ge 3$.\\
Then $r$ is contained in all $\gamma_{pr}(G_{i-1})$-set if and only
if $r$ is contained in all $\gamma_{pr}(G_{i})$-set, where
$G_{i}=G_{i-1}-D_{i-1}[v_i]$.
\end{lemma}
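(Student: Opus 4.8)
The plan is to follow the architecture of the proof of Lemma~\ref{lem3} as closely as possible, isolating exactly where the hypothesis $d(r,v_i)\ge 3$ was used and replacing each such use by an appeal to one of the structural conditions (1)--(3). As before I set $D_1=R_1\cap C_{i-1}(v_i)$, $D_2=R_2\cap D_{i-1}(v_i)$ and $D=D_1\cup D_2$. The first point is that the size identity $\gamma_{pr}(G_{i-1})=\gamma_{pr}(G_i)+|D|$ of Claim~\ref{claim4} and the estimate $|D|\le|PD|\le|D|+2$ of Claim~\ref{claim5}, for $PD=S\cap D_{i-1}[v_i]$, carry over unchanged. The swaps in Claim~\ref{claim4} only move vertices inside $D_{i-1}[v_i]$ or exchange a partner for a neighbour of $v_i$, and since $d(r,v_i)=2$ forces $r\notin N(v_i)$, none of them ever touches $r$; the swaps in Claim~\ref{claim5} that involve $F(F(v_i))=r$ merely \emph{add} vertices to exhibit a strictly smaller PDS for a contradiction, so allowing $F(F(v_i))=r$ is harmless there. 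The forward direction is also verbatim the same: if some $\gamma_{pr}(G_i)$-set $S'$ omits $r$, then $S'\cup D$ is a $\gamma_{pr}(G_{i-1})$-set omitting $r$, because $r\notin D_{i-1}[v_i]$ and the extension argument never refers to distances.

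The real content is the backward direction: given an arbitrary $\gamma_{pr}(G_{i-1})$-set $S$, and assuming every $\gamma_{pr}(G_i)$-set contains $r$, I must show $r\in S$, splitting on $|PD|\in\{|D|,|D|+1,|D|+2\}$. The case $|PD|=|D|$ needs no change, since $S'=S-PD$ is already a $\gamma_{pr}(G_i)$-set and $r\in S'$ forces $r\in S$ because $r\notin PD$. In the cases $|PD|=|D|+1$ and $|PD|=|D|+2$, Lemma~\ref{lem3} built a $\gamma_{pr}(G_i)$-set $S'$ by deleting $PD$ and re-inserting one or two vertices, then invoked $d(v_i,r)\ge 3$ to guarantee those vertices differ from $r$, so that $r\in S'\Leftrightarrow r\in S$. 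When $d(r,v_i)=2$ the single dangerous re-insertion is the pair $\{F(v_i),F(F(v_i))\}=\{F(v_i),r\}$, used precisely when $v_i$ retains only the neighbour $F(v_i)$ in $G_i$; this is the only place where $r$ can be forced into $S'$, and it is exactly the regime excluded by condition (1).

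The three conditions are designed to supply, inside $V(G_i)$, a substitute partner for $F(v_i)$ distinct from $r$. Under (1), $|V(B_1)|\ge 3$ gives a sibling $w$ of $v_i$ in $B_1$; as $w$ is not a descendant of $v_i$ it survives in $G_i$ and yields $|N(v_i)\cap V(G_i)|\ge 2$, placing us in the benign sub-case of Lemma~\ref{lem3} where two neighbours of $v_i$ (neither equal to $r$) are re-inserted. Under (2), $|V(B_1)|=2$ but $F(v_i)$ has a second child $v_i^{*}\in C_{i-1}(F(v_i))$ lying in $V(G_i)$ and adjacent to $F(v_i)$, so I re-insert $\{F(v_i),v_i^{*}\}$ in place of $\{F(v_i),r\}$. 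Under (3), $|V(B_1)|=2$ and $v_i$ is the only child of $F(v_i)$, but $|V(B_2)|\ge 3$ furnishes a further vertex $w\in V(B_2)$ adjacent to $F(v_i)$ (a child of $r$ other than $F(v_i)$), again with $w\in V(G_i)$ and $w\neq r$, so I re-insert $\{F(v_i),w\}$. In each instance the re-inserted pair still dominates the surviving closed neighbourhood of $F(v_i)$ in $G_i$ and is internally matchable, so Claim~\ref{claim4} certifies that $S'$ is a $\gamma_{pr}(G_i)$-set, while no vertex equal to $r$ is added or removed, giving $r\in S'\Leftrightarrow r\in S$ and hence $r\in S$.

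The main obstacle I anticipate is the bookkeeping in the case $|PD|=|D|+1$ (and in the sub-case of $|PD|=|D|+2$ where $v_i$ has a unique surviving neighbour): I must confirm that the substitute furnished by (1)--(3) can be taken \emph{outside} $S$ when it must act as a new dominator, and that if every candidate substitute already lies in $S$ one can still rearrange the matching around $F(v_i)$ without disturbing $r$. This is exactly the situation the hypotheses $(R_1\cup R_2)\cap C_{i-1}(v_i)\neq\emptyset$ and ``$G_{i-1}[R_1\cap C_{i-1}(v_i)]$ has a perfect matching'' are meant to control, and verifying that they guarantee a valid $r$-avoiding rerouting in each of the three regimes is the one spot where genuine case-checking, rather than transcription from Lemma~\ref{lem3}, will be required.
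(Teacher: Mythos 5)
Your overall strategy is the same as the paper's: reuse the machinery of Lemma~\ref{lem3} (the identity $\gamma_{pr}(G_{i-1})=\gamma_{pr}(G_i)+|D|$, the bound $|D|\le |PD|\le |D|+2$, the verbatim forward direction), and observe that the only place $d(r,v_i)\ge 3$ was genuinely needed is to keep $r$ out of the re-inserted vertices, which conditions (1)--(3) are meant to replace by supplying a neighbour of $F(v_i)$ in $V(G_i)$ other than $r$. That diagnosis is exactly right, and your handling of $|PD|=|D|$ and of the $|V(B_1)|\ge 3$ sub-case of $|PD|=|D|+2$ matches the paper.

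However, the proposal stops short of a proof in precisely the place you flag at the end: the case $|PD|=|D|+1$ (together with the sub-cases of $|PD|=|D|+2$ under conditions (2) and (3)) is where the paper does almost all of its work, and ``re-insert $\{F(v_i),w\}$'' is not yet an argument there. Concretely, when $|PD|=|D|+1$ one has $v_i\in S$ paired with some $v_i'\in V(B_1)$, and the paper must split on whether $v_i'=F(v_i)$, whether $F(v_i)\in S$, and --- crucially --- whether the candidate substitute $w$ (the sibling in $B_1$, the other child of $F(v_i)$, or the third vertex of $B_2$) already lies in $S$; in that last situation one has to show $w$ is a cut-vertex and chase its paired vertex to locate a neighbour outside $S$ that can be re-inserted instead, since otherwise the constructed $S'$ has the wrong cardinality or no perfect matching. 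This chasing is the actual content of the lemma and is left as an acknowledged to-do in your write-up. A second, smaller point: under condition (3) (and condition (2) with $|PD|=|D|+2$) the paper in fact shows $r\notin S$ and then exhibits an $r$-free $\gamma_{pr}(G_i)$-set, so those cases are impossible rather than benign; your uniform ``$r\in S'$ and no added vertex equals $r$, hence $r\in S$'' framing still yields the conclusion, but only after $S'$ has genuinely been certified as a $\gamma_{pr}(G_i)$-set. So the route is the paper's, but the substantive case analysis that constitutes the proof is named rather than carried out.
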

\begin{proof}
We still use the notations in Lemma \ref{lem3}. With the same
argument to Claim \ref{claim4},
$\gamma_{pr}(G_{i-1})=\gamma_{pr}(G_i)+|D|$.

If there is a $\gamma_{pr}(G_i)$-set $S'$ such that $r\not\in S'$,
then let $S=S'\cup D$. Thus $S$ is a $\gamma_{pr}(G_{i-1})$-set and
$r\not\in S$. Therefore, if $r$ is contained in all
$\gamma_{pr}(G_{i-1})$-set, then $r$ is contained in all
$\gamma_{pr}(G_{i})$-set.

For converse, let $S$ be an arbitrary $\gamma_{pr}(G_{i-1})$-set and
$PD=S\cap D_{i-1}[v_i]$. With the similar argument to Claim
\ref{claim5}, $|D|\le |PD|\le |D|+2$. We discuss the following case
according to the size of $PD$.

\vspace{2mm}

{\bf Case 1:} $|PD|=|D|+2$\\
If $|V(B_1)|\ge 3$, then let $w$ be a vertex in $V(B_1)$ other than
$v_i$ and $F(v_i)$. Then $w,F(v_i)\not\in S$. Let $S'=S-PD\cup \{w,
F(v_i)\}$. Then $S'$ is a $\gamma_{pr}(G_{i})$-set. Since any new
added vertex is not $r$, then $r\in S$. If $|V(B_1)|=2$ and
$C_{i-1}(F(v_i))\not=\{v_i\}$, let $w$ be a child of $F(v_i)$ other
than $v_i$. It is obvious that $r, F(v_i)\not\in S$. If $w\not\in
S$, then $S'=S-PD\cup \{F(v_i),w\}$ is a $\gamma_{pr}(G_{i})$-set.
If $w\in S$ and $w'$ is its paired vertex, then there is a neighbor
$w''$ of $w'$ such that $w''\not\in S$. Then $S'=S-PD\cup
\{F(v_i),w''\}$ is a $\gamma_{pr}(G_{i})$-set. Thus $r\not\in S'$.
It contradicts that  $r$ is contained in all $\gamma_{pr}(G_i)$-set.
If $|V(B_1)|=2$, $C_{i-1}(F(v_i))=\{v_i\}$ and $|V(B_2)|\ge 3$, let
$w$ be a vertex in $V(B_2)$ other than $F(v_i)$ and $r$. Then
$\{r,F(v_i),w\}\cap S=\emptyset$. Let $S'=S-PD\cup \{w,F(v_i)\}$.
Then $S'$ is a $\gamma_{pr}(G_{i})$-set. However, $r\not\in S'$. It
contradicts that $r$ is contained in all $\gamma_{pr}(G_i)$-set.

\vspace{2mm}

{\bf Case 2:} $|PD|=|D|+1$\\
In this case, $v_i\in S$. Let $v_i'$ be the paired vertex of $v_i$,
then $v_i'\in V(B_1)$. Suppose $|V(B_1)|\ge 3$. If $v_i'\not=F(v_i)$
and $F(v_i)\not\in S$, then $S'=S-PD\cup \{F(v_i)\}$ is a
$\gamma_{pr}(G_{i})$-set. If $v_i'\not=F(v_i)$ and $F(v_i)\in S$,
then $v_i'$ is a cut-vertex of $G_{i-1}$. Otherwise,
$S-PD-\{v_i'\}\cup D$ is a smaller PDS of $G_{i-1}$. It is
impossible that $C_{i-1}(v_i')\subseteq S$. Thus there is a child
$w$ of $v_i'$ such that $w\not\in S$. $S'=S-PD\cup \{w\}$ is a
$\gamma_{pr}(G_{i})$-set. If $v_i'=F(v_i)$, let $w$ be a vertex in
$V(B_1)$ other than $v_i$ and $F(v_i)$. If $w\not\in S$, then
$S'=S-PD\cup \{w\}$ is a $\gamma_{pr}(G_{i})$-set. If $w\in S$, then
$w$ is a cut-vertex. If its paired vertex $w'\in C_{i-1}(w)$, then
there is a neighbor $w''$ of $w'$ such that $w''\not\in S$.
$S'=S-PD\cup \{w''\}$ is a $\gamma_{pr}(G_{i})$-set. If $w\in S$ and
its paired vertex $w'\in V(B_1)$, then $w'$ is also a cut-vertex. It
is impossible that $C_{i-1}(w)\subseteq S$, i.e., there is a child
$w''$ of $w$ such that $w''\not\in S$. $S'=S-PD\cup \{w''\}$ is a
$\gamma_{pr}(G_{i})$-set. In any case, $r\in S'$. On the other hand,
any new added vertex is not $r$. So $r\in S$.

Suppose $|V(B_1)|=2$ and $C_{i-1}(F(v_i))\not=\{v_i\}$. In this
case, $v_i$ and $F(v_i)$ are paired in $S$. Let $w$ be a child of
$F(v_i)$ other than $v_i$. If $w\not\in S$, then $S'=S-PD\cup \{w\}$
is a $\gamma_{pr}(G_{i})$-set. If $w\in S$, let $w'$ be its paired
vertex. Then there is a neighbor $w''$ of $w'$ such that $w''\not\in
S$. $S'=S-PD\cup \{w''\}$ is a $\gamma_{pr}(G_{i})$-set. In any
case, $r\in S'$. Since any new added vertex is not $r$, thus $r\in
S$.

Suppose $|V(B_1)|=2$, $C_{i-1}(F(v_i))=\{v_i\}$ and $|V(B_2)|\ge 3$.
In this case, $v_i$ and $F(v_i)$ are paired in $S$. Moreover,
$r\not\in S$, otherwise $S-PD-\{F(v_i)\}\cup D$ is a smaller PDS of
$G_{i-1}$. Let $w$ be a vertex in $V(B_2)$ other than $F(v_i)$ and
$r$. It is obvious that $w\not\in S$, then $S'=S-PD\cup \{w\}$ is a
$\gamma_{pr}(G_{i})$-set. Thus $r\not\in S'$. It contradicts that
$r$ is contained in all $\gamma_{pr}(G_i)$-set.

\vspace{2mm}

{\bf Case 3:} $|PD|=|D|$\\
In this case, $S'=S-PD$ is a $\gamma_{pr}(G_{i})$-set. Then $r\in S$
due to $r\in S'$. $\Box$
\end{proof}\\

If $d(v_i,r)=2$, $|V(B_1)|=2$, $C_{i-1}(F(v_i))=\{v_i\}$,
$|V(B_2)|=2$ and $v_i$ satisfies other conditions in Lemma
\ref{lem6}, then we can not prune $G_{i-1}$. We call $B_2$ the first
kind of TYPE-1 block containing $r$.

\begin{lemma}\label{lem7}
When $v_i$ is a considering vertex such that $d(r,v_i)=1$. Let $B$
be the block containing $v_i$ and $r$. Suppose $l(v_i)=\emptyset$,
$(R_1\cup R_2)\cap C_{i-1}(v_i)\not=\emptyset$ and $G_{i-1}[R_1\cap
C_{i-1}(v_i)]$ has a perfect matching. If $|V(B)|\ge 4$ or
$|V(B)|=3$ and every vertex in $V(B)$ is cut-vertex, then $r$ is
contained in all $\gamma_{pr}(G_{i-1})$-set if and only if $r$ is
contained in all $\gamma_{pr}(G_{i})$-set, where
$G_{i}=G_{i-1}-D_{i-1}[v_i]$.
\end{lemma}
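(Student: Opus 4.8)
The plan is to follow verbatim the template already established in Lemmas \ref{lem3} and \ref{lem6}, keeping the notation $D_1=R_1\cap C_{i-1}(v_i)$, $D_2=R_2\cap D_{i-1}(v_i)$ and $D=D_1\cup D_2$. First I would note that the counting identity $\gamma_{pr}(G_{i-1})=\gamma_{pr}(G_i)+|D|$ holds by the argument of Claim \ref{claim4}: that reduction only rearranges vertices inside $D_{i-1}[v_i]$ and the block through $v_i$, and nowhere uses the distance of $v_i$ from $r$, so it transfers unchanged. The forward implication is then immediate and identical to before: given a $\gamma_{pr}(G_i)$-set $S'$ with $r\notin S'$, the set $S'\cup D$ is a $\gamma_{pr}(G_{i-1})$-set avoiding $r$. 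Hence it remains only to treat the converse, i.e.\ to show that if $r$ lies in every $\gamma_{pr}(G_i)$-set then $r$ lies in every $\gamma_{pr}(G_{i-1})$-set.

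For the converse I would fix an arbitrary $\gamma_{pr}(G_{i-1})$-set $S$, put $PD=S\cap D_{i-1}[v_i]$, and record the bound $|D|\le|PD|\le|D|+2$ exactly as in Claim \ref{claim5}. Since $d(r,v_i)=1$ we have $F(v_i)=r$ and $B$ is the block through $v_i$ and $r$; every vertex of $V(B)\setminus\{v_i\}$ other than $r$ has father $r$ and therefore survives into $G_i$. The three cases $|PD|=|D|+2$, $|PD|=|D|+1$, $|PD|=|D|$ are then handled as in Lemma \ref{lem6}: in each case I would replace $PD$ by $D$ together with a suitable matched pair (or single vertex) drawn from $V(B)\setminus\{v_i\}$ and, when needed, from the surviving children of those vertices, so as to obtain a $\gamma_{pr}(G_i)$-set $S'$. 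The whole point of the construction is to ensure that none of the newly added vertices is $r$; then $r\in S'$ (by hypothesis) forces $r\in S-PD\subseteq S$, as desired.

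The crux, and the sole reason for the hypothesis on $B$, appears in the case $|PD|=|D|+2$. Here $v_i$ is paired downward, so the surviving neighbours of $v_i$ in $B$ must absorb a full matched pair, and I need two vertices of $V(B)\setminus\{v_i\}$ different from $r$ to adjoin to $S-PD$. When $|V(B)|\ge 4$ this is immediate: $B$ contains at least two such vertices $w',w''$, which are adjacent and, being upward neighbours of $v_i$, lie outside $S$, so $S'=(S-PD)\cup\{w',w''\}$ works. When $|V(B)|=3$, say $V(B)=\{v_i,r,w\}$, only $w$ is available; here I would use that $w$ is a cut-vertex to supply a surviving child $w_c\in C_{i-1}(w)$ and pair $w$ with $w_c$, applying the standard swap of Lemma \ref{lem6} --- replacing the partner of a vertex already in $S$ by a neighbour outside $S$ --- to repair membership and the perfect matching whenever $w$ or $w_c$ happens to lie in $S$. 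This is exactly why the second alternative of the hypothesis insists that every vertex of $V(B)$, in particular $w$, be a cut-vertex. The cases $|PD|=|D|+1$ and $|PD|=|D|$ need at most one fresh non-$r$ vertex and so are covered a fortiori by the same block condition. I expect the membership-and-matching bookkeeping in the $|V(B)|=3$ subcase of $|PD|=|D|+2$ to be the main obstacle, the remainder being routine adaptations of the arguments already given.
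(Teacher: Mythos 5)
Your proposal matches the paper's own proof in all essentials: the same sets $D_1,D_2,D$, the same counting identity $\gamma_{pr}(G_{i-1})=\gamma_{pr}(G_i)+|D|$ transferred from Claim \ref{claim4}, the same immediate forward implication, and the same case split $|PD|\in\{|D|,|D|+1,|D|+2\}$ with the block-size hypothesis invoked exactly where you place it — namely to find non-$r$ replacement vertices in $V(B)\setminus\{v_i\}$ (two of them when $|V(B)|\ge 4$, or the single cut-vertex $w$ together with a child or a re-paired partner when $|V(B)|=3$). The bookkeeping you flag in the $|V(B)|=3$ subcase is carried out in the paper just as you outline it, so the proposal is correct and takes essentially the same route.
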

\begin{proof}
We still use the notations in Lemma \ref{lem3}.  With the same
argument to Claim \ref{claim4},
$\gamma_{pr}(G_{i-1})=\gamma_{pr}(G_i)+|D|$.

If there is a $\gamma_{pr}(G_i)$-set $S'$ such that $r\not\in S'$,
then let $S=S'\cup D$. Thus $S$ is a $\gamma_{pr}(G_{i-1})$-set and
$r\not\in S$. Therefore, if $r$ is contained in all
$\gamma_{pr}(G_{i-1})$-set, then $r$ is contained in all
$\gamma_{pr}(G_{i})$-set.

For converse, let $S$ be an arbitrary $\gamma_{pr}(G_{i-1})$-set and
$PD=S\cap D_{i-1}[v_i]$. With the similar argument to Claim
\ref{claim5}, $|D|\le |PD|\le |D|+2$.

Suppose $|PD|=|D|+2$, then $N(v_i)\cap V(B)\cap S=\emptyset$.
Otherwise, $S-PD\cup D$ is a smaller PDS of $G_{i-1}$. Thus
$r\not\in S$. If $|V(B)|\ge 4$, let $w_1$ and $w_2$ be two vertices
other than $v_i$ and $r$. In this case, $S'=S-PD\cup \{w_1,w_2\}$ is
a $\gamma_{pr}(G_i)$-set. However, $r\not\in S'$. It contradicts
that $r$ is contained in all $\gamma_{pr}(G_{i-1})$-set. If
$|V(B)|=3$ and every vertex in $V(B)$ is cut-vertex, let $w$ be
another vertex in $V(B)$ other than $v_i$ and $r$. If there is a
child $w_1$ of $w$ such that $w_1\not\in S$, then $S-PD\cup
\{w,w_1\}$ is a $\gamma_{pr}(G_i)$-set not containing $r$. It is
also a contradiction. Otherwise, take any child of $w$, say $w_1$.
Suppose $w_2$ is the paired vertex of $w_1$. If $N(w_2)\subset S$,
then $S-PD-\{w_2\}\cup D\cup \{w\}$ is a smaller PDS of $G_i$. Thus
there is a neighbor $w_3$ of $w_2$ such that $w_3\not\in S$. Then
$S'=S-PD\cup D\cup \{w,w_3\}$ is a $\gamma_{pr}(G_i)$-set not
containing $r$. It is still a contradiction.

Suppose $|PD|=|D|+1$, then $v_i\in S$. If $r$ is paired with $v_i$,
then we have done. If $|V(B)|\ge 4$, let $w_1$ and $w_2$ are two
vertices other than $v_i$ and $r$. We assume $w_1$ is the paired
vertex of $v_i$. If $w_2\not\in S$, then $S'=S-PD\cup \{w_2\}$ is a
$\gamma_{pr}(G_i)$-set. If $w_2\in S$, let $w_3$ be its paired
vertex. If $w_2$ is not a cut-vertex, then $S-PD-\{w_2\}\cup D$ is a
smaller PDS of $\gamma_{pr}(G_i)$-set. Thus $w_2$ is a cut-vertex.
If $w_3\in C_{i-1}(w_2)$, then there is a neighbor $w_4$ of $w_3$
such that $w_4\not\in S$. $S'=S-PD\cup \{w_4\}$ is a
$\gamma_{pr}(G_i)$-set. If $w_3\in V(B)$, then $w_3$ is also a
cut-vertex and there is a child $w_4$ of $w_3$ such that $w_4\not\in
S$. $S'=S-PD\cup \{w_4\}$ is a $\gamma_{pr}(G_i)$-set. If $|V(B)|=3$
and every vertex in $V(B)$ is cut-vertex, let $w$ be another vertex
in $V(B)$ other than $v_i$ and $r$. In this case, $w$ is the paired
vertex of $v_i$. If there is a child $w_1$ of $w$ such that
$w_1\not\in S$, then $S'=S-PD\cup \{w_1\}$ is a
$\gamma_{pr}(G_i)$-set. Otherwise, take any child of $w$, say $w_1$,
and $w_2$ is its paired vertex. If $N(w_2)\subseteq S$, then
$S-PD-\{w_2\}\cup D$ is a smaller PDS of $G_{i-1}$. Thus there is a
neighbor $w_3$ of $w_2$ such that $w_3\not\in S$. Then $S'=S-PD\cup
\{w_3\}$ is a $\gamma_{pr}(G_i)$-set. In any case, $r\in S'$.
However, any new added vertex is not $r$. Thus $r\in S$.

If $|PD|=|D|$, then $S'=S-PD$ is a $\gamma_{pr}(G_i)$-set. Thus
$r\in S$ due to $r\in S'$.~~~$\Box$
\end{proof}\\

If $d(v_i,r)=1$, $|V(B)|=3$, there is a vertex in $V(B)$ which is
not cut-vertex and $v_i$ satisfies other conditions in Lemma
\ref{lem7}, then we can not prune $G_{i-1}$. We call $B$ the second
kind of TYPE-1 block containing $r$. If $d(v_i,r)=1$, $|V(B)|=2$ and
$v_i$ satisfies other conditions in Lemma \ref{lem7}, we call $B$
the first kind of TYPE-2 block containing $r$.

\begin{lemma}\label{lem8}
When $v_i$ is a considering vertex such that $l(v_i)=r_1$. Let $B'$
is an end block containing $v_i$ in $G_{i-1}$. If $G_{i-1}[R_1\cap
C_{i-1}(v_i)]$ has a perfect matching, then $r$ is contained in all
$\gamma_{pr}(G_{i-1})$-set if and only if $r$ is contained in all
$\gamma_{pr}(G_{i})$-set, where $G_i=G_{i-1}-(D_{i-1}(v_i)-V(B'))$.
\end{lemma}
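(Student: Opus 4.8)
The plan is to follow the template of Lemmas \ref{lem3}--\ref{lem7}. I would keep the notation $D = D_1 \cup D_2$ of Lemma \ref{lem3}, with $D_1 = R_1 \cap C_{i-1}(v_i)$ and $D_2 = R_2 \cap D_{i-1}(v_i)$; by hypothesis $G_{i-1}[D_1]$ has a perfect matching. The set actually deleted here is $X = D_{i-1}(v_i) - V(B')$, which contains $D$ but leaves the end block $B'$ and the vertex $v_i$ in place. The first step is a Claim in the spirit of Claim \ref{claim4}: by the same exchange moves one forces every $\gamma_{pr}(G_{i-1})$-set to contain $D$, with $D_1$ paired internally through the given perfect matching and each $r_2$-descendant paired with its recorded partner, giving $\gamma_{pr}(G_{i-1}) = \gamma_{pr}(G_i) + |D|$.

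What distinguishes this lemma from Lemmas \ref{lem3}--\ref{lem7} is that $l(v_i)=r_1$, so $v_i$ carries a clean pendant inside the end block $B'$, a genuine domination constraint that must survive pruning; this is why $v_i$ and $B'$ are retained. Two facts then drive the argument. First, the pendant forces an $S$-edge inside $B'$, so $v_i$ is always dominated by $B'$; and apart from the degenerate $K_2$ case, where $v_i$ is forced into every PDS, one may include or exclude $v_i$ from the matched part of $B'$ at no change in cardinality. Second, the perfect matching of $G_{i-1}[D_1]$ makes $X$ self-sufficient — the $r_1$-children pair among themselves and dominate their own clean leaves without ever using $v_i$ — so deleting $X$ neither unpairs nor un-dominates anything outside $X$. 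Together these say that the shape of an optimal PDS at and above $v_i$ is insensitive to the presence of $X$.

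With the value relation the forward implication is immediate: from a $\gamma_{pr}(G_i)$-set $S'$ with $r \notin S'$, the set $S = S' \cup D$ is a $\gamma_{pr}(G_{i-1})$-set by the Claim and still avoids $r$, since $r \notin D \subseteq X$; hence $r$ in every $\gamma_{pr}(G_{i-1})$-set forces $r$ in every $\gamma_{pr}(G_i)$-set. For the converse I would take an arbitrary $\gamma_{pr}(G_{i-1})$-set $S$, normalise it so that $S \cap X = D$ with $D_1$ paired internally, and delete $X$ to form $S' = S - X$. The only repair beyond Lemma \ref{lem3} arises when $v_i$ was paired with a vertex of $D_1$: I would re-pair $D_1$ internally by the perfect matching and re-attach $v_i$ by pairing it with a vertex of its pendant $B'$, which restores both the matching and the domination of $v_i$ and, by the first fact, costs nothing. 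Since every move stays within $X \cup V(B') \cup \{v_i\}$, none of them touches $r$, which lies strictly above $v_i$; thus $S'$ is a $\gamma_{pr}(G_i)$-set with $r \in S \iff r \in S'$, and $r$ in every $\gamma_{pr}(G_i)$-set forces $r$ in every $\gamma_{pr}(G_{i-1})$-set.

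The crux is exactly this boundary repair in the converse: deleting $X$ unpairs $v_i$ whenever $v_i$ was matched into $D_1$ and removes the $D_1$-children that had been dominating $v_i$, so one must restore $v_i$'s matching and its domination without inflating the cardinality and without disturbing $r$. The perfect-matching hypothesis on $D_1$ and the retained pendant $B'$ are precisely what license this — $D_1$ re-pairs internally while $v_i$ re-pairs into $B'$ — and because the repair keeps $v_i$ in $S'$ with a new partner drawn from $B'$ rather than from the deleted block, the membership of $r$ is carried across unchanged while minimality is preserved.
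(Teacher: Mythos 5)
Your proposal matches the paper's proof in all essentials: the same set $D=D_1\cup D_2$, the same value identity $\gamma_{pr}(G_{i-1})=\gamma_{pr}(G_i)+|D|$ (the paper's Claim~\ref{claim9}, likewise justified only by analogy with Claim~\ref{claim4}), the same forward direction via $S=S'\cup D$, and the same crucial repair in the converse, namely re-pairing $v_i$ with a vertex $x\in V(B')-\{v_i\}$ when its partner lay in the deleted set. The only organizational difference is that the paper does not normalise the arbitrary minimum set $S$ but instead bounds $|PD|=|S\cap(D_{i-1}(v_i)-V(B'))|$ by $|D|\le |PD|\le |D|+1$ (Claim~\ref{claim10}) and case-splits, which sidesteps having to verify that normalisation never disturbs $r$; your version needs, and correctly asserts, that every exchange stays inside $D_{i-1}[v_i]\cup V(B')$, which holds here precisely because the retained end block $B'$ always supplies a local re-pairing target.
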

\begin{proof}
Let $D_1=R_1\cap C_{i-1}(v_i)$, $D_2=R_2\cap D_{i-1}(v_i)$ and
$D=D_1\cup D_2$. Similar to Claim \ref{claim4}, we obtain the
following claim.

\begin{claim}\label{claim9}
$\gamma_{pr}(G_{i-1})=\gamma_{pr}(G_i)+|D|$.
\end{claim}

If there is a $\gamma_{pr}(G_i)$-set $S'$ such that $r\not\in S'$,
then let $S=S'\cup D$. By Claim \ref{claim9}, $S$ is a
$\gamma_{pr}(G_{i-1})$-set and $r\not\in S$. Therefore, if $r$ is
contained in all $\gamma_{pr}(G_{i-1})$-set, then $r$ is contained
in all $\gamma_{pr}(G_{i})$-set.

For converse, let $S$ be an arbitrary $\gamma_{pr}(G_{i-1})$-set and
$PD=S\cap (D_{i-1}(v_i)-V(B'))$.

\begin{claim}\label{claim10}
$|D|\le |PD|\le |D|+1$.
\end{claim}
\begin{proof}
If $|PD|\ge |D|+2$ and $|PD|$ is even. Since $|V(B')\cap S|\ge 1$,
then either $v_i\in S$ or $y\in S$, where $y\in V(B')-\{v_i\}$.
$S-PD\cup D$ is a smaller PDS of $G_{i-1}$. It is a contradiction.

If $|PD|\ge |D|+3$ and $|PD|$ is odd. In this case, $v_i\in S$ and
its paired vertex $v\in C_{i-1}(v_i)-V(B')$. Let $x\in
V(B')-\{v_i\}$, then $x\not\in S$. $S-PD\cup D\cup \{x\}$ is a
smaller PDS of $G_{i-1}$. It is a contradiction.~~~$\Box$
\end{proof}\\

If $|PD|=|D|+1$, then $v_i\in S$ and its paired vertex $v\in
C_{i-1}(v_i)-V(B')$. Let $S'=S-PD\cup \{x\}$, where $x\in
V(B')-\{v_i\}$. By Claim \ref{claim9}, $S'$ is a
$\gamma_{pr}(G_{i})$-set. Then $r\in S'$. Since $x\not=r$, $r\in S$.

If $|PD|=|D|$. Since $V(B')\cap S\not=\emptyset$, Thus $S'=S-PD$ is
a PDS of $G_i$. By Claim \ref{claim9}, $S'$ is also a
$\gamma_{pr}(G_{i})$-set. Thus $r\in S$ due to $r\in S'$.~~~$\Box$
\end{proof}

\begin{lemma}\label{lem11}
When $v_i$ is a considering vertex such that $d(r,v_i)\ge 3$ and
$G_{i-1}[R_1\cap C_{i-1}(v_i)]$ has not a perfect matching, let $M$
be the maximum matching in $G_{i-1}[R_1\cap C_{i-1}(v_i)]$ and $u\in
(R_1\cap C_{i-1}(v_i))-V(M)$. Then $r$ is contained in all
$\gamma_{pr}(G_{i-1})$-set if and only if $r$ is contained in all
$\gamma_{pr}(G_{i})$-set, where
$G_i=G_{i-1}-(D_{i-1}(v_i)-D_{i-1}[u])$.
\end{lemma}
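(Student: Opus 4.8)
The plan is to follow the template established by the proofs of Lemmas \ref{lem3}, \ref{lem6}, and \ref{lem7}. As before, let $D_1=R_1\cap C_{i-1}(v_i)$, $D_2=R_2\cap D_{i-1}(v_i)$, and $D=D_1\cup D_2$, but now $G_{i-1}[D_1]$ has no perfect matching, and $u\in D_1$ is the distinguished vertex left uncovered by the maximum matching $M$. The vertex $u$ together with its subtree $D_{i-1}[u]$ is retained in $G_i$, which is the essential new feature here. The first step is to establish the cardinality relation $\gamma_{pr}(G_{i-1})=\gamma_{pr}(G_i)+|D|$, playing the role of Claim \ref{claim4}. The forward direction ($\le$) is routine: any $\gamma_{pr}(G_i)$-set extends to a PDS of $G_{i-1}$ by adjoining $D$, where the unmatched vertex $u$ can be paired with a child in $D_{i-1}[u]$ (note $u$ has a child because $l(u)=r_1$ forces $u$ to be a support-type cut-vertex). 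For the reverse direction, I would run the same exchange arguments used in Claim \ref{claim4} to normalize an arbitrary $\gamma_{pr}(G_{i-1})$-set $S$ so that $D\subseteq S$ and every vertex of $D$ is paired inside $D\cup D_{i-1}[u]$, after which deleting the pruned portion yields a PDS of $G_i$ of the correct size.

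**The two directions of the biconditional.**
Once the cardinality claim is in place, the "only if" direction is immediate and mirrors the earlier lemmas: if some $\gamma_{pr}(G_i)$-set $S'$ avoids $r$, then $S'\cup D$ is a $\gamma_{pr}(G_{i-1})$-set avoiding $r$, so $r$ being in every minimum PDS of $G_{i-1}$ forces the same for $G_i$. For the converse I would take an arbitrary $\gamma_{pr}(G_{i-1})$-set $S$ and set $PD=S\cap(D_{i-1}(v_i)-D_{i-1}[u])$, the intersection of $S$ with the \emph{pruned} region. The key technical step is an analogue of Claim \ref{claim5} bounding $|PD|$ in terms of $|D\setminus\{u\}|$ (since $u$ and its subtree survive the pruning, $u$ should be handled together with the surviving graph rather than lumped into the deleted count). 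I then split into cases according to whether $|PD|$ exceeds the generic value by $0$, $1$, or $2$, and in each case I reconstruct a $\gamma_{pr}(G_i)$-set $S'$ by swapping the surplus pruned vertices for vertices in $D_{i-1}[u]$ or in $N(v_i)\cap V(G_i)$. Because $d(r,v_i)\ge 3$, any vertex introduced by these swaps is at distance at least $2$ from $r$, hence is never $r$ itself; this is exactly the feature that lets me conclude $r\in S'$ forces $r\in S$.

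**The main obstacle.**
The crux is the reverse inequality in the cardinality claim combined with the $|PD|$ bound, precisely because here the matching on $D_1$ is \emph{imperfect}. In the perfect-matching case (Lemma \ref{lem3}) every child in $D_1$ could be paired internally, but now $u$ is unmatched, so one must argue that $S$ can be normalized to pair $u$ downward into $D_{i-1}[u]$ (which is retained) rather than upward to $v_i$. The delicate point is ruling out the configuration where $u$ is paired with $v_i$ in $S$: if that happens, the exchange must free $v_i$ while preserving both domination and the perfect matching, and I expect this to require the same "find an undominated-if-removed neighbor $v_i'$ of $v_i$ outside $S$" trick used repeatedly in Claim \ref{claim4}. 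Verifying that $u$'s subtree $D_{i-1}[u]$ always admits a self-contained perfect matching on the relevant retained vertices — so that retaining $D_{i-1}[u]$ does not inadvertently change $\gamma_{pr}(G_i)$ — is where the bookkeeping is heaviest, and it is the step I would write out most carefully.
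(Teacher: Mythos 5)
There is a genuine gap, and it sits exactly where the quantitative heart of the argument lies: your definition of $D$. You take $D=D_1\cup D_2$ (and later work with $|D\setminus\{u\}|$), but when $G_{i-1}[D_1]$ has no perfect matching the set $D_1-V(M)$ may contain \emph{several} vertices, one per odd clique of $G_{i-1}[R_1\cap C_{i-1}(v_i)]$ (the children of $v_i$ split into cliques, one per block below $v_i$). Only one of these, namely $u$, can be absorbed by pairing with $v_i$; every other unmatched vertex $w\in D_1-V(M)-\{u\}$ is forced into $S$ and must be paired with one of its own children, and those children lie in the pruned region and contribute to $\gamma_{pr}(G_{i-1})-\gamma_{pr}(G_i)$. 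The paper therefore sets $D=D_1\cup D_2\cup D_1'-\{u\}$, where $D_1'$ consists of one child of each vertex of $D_1-V(M)-\{u\}$. With your $D$, the identity $\gamma_{pr}(G_{i-1})=\gamma_{pr}(G_i)+|D|$ is simply false whenever $|D_1-V(M)|\ge 2$ (it is off by $2(|D_1-V(M)|-1)$ once you also remove $u$), and your analogue of Claim \ref{claim5} fails too, since the minimum possible $|PD|$ already exceeds your claimed upper bound. Everything downstream (the exchange arguments and the case split on the surplus $0,1,2$) collapses with it.

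Two smaller points. First, your ``main obstacle'' is aimed in the wrong direction: the paper normalizes so that $u$ is paired \emph{with} $v_i$ (this is precisely what the labels $l(v_i)=l(u)=r_2$ record for later iterations), not downward into $D_{i-1}[u]$; there is nothing to rule out there. Second, you need not verify that $D_{i-1}[u]$ admits a self-contained perfect matching: that subtree is retained in $G_i$ and its treatment is deferred to a later pruning step, which is the whole point of keeping it. The delicate spot you should instead write out is the one the paper handles in the ``only if'' direction, where a $\gamma_{pr}(G_i)$-set $S'$ avoiding $r$ with $u,v_i\notin S'$ must be modified to $S'-D_{i-1}[u]\cup\{u,v_i\}\cup D$ rather than merely extended by $D$.
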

\begin{proof}
Let $D_1=R_1\cap C_{i-1}(v_i)$ and $D_2=R_2\cap D_{i-1}(v_i)$. Take
one child of each vertex in $D_1-V(M)-\{u\}$ to construct vertex set
$D_1'$. $D=D_1\cup D_2\cup D_1'-\{u\}$. Then we obtain the following
claim.

\begin{claim}\label{claim12}
$\gamma_{pr}(G_{i-1})=\gamma_{pr}(G_i)+|D|$.
\end{claim}
\begin{proof}
Any $\gamma_{pr}(G_i)$-set can be extended to a PDS of $G_{i-1}$ by
adding $D$. Thus $\gamma_{pr}(G_{i-1})\le\gamma_{pr}(G_i)+|D|$.

For converse, let $S$ be a $\gamma_{pr}(G_{i-1})$-set. With the same
argument to Claim \ref{claim4}, $D_2\subset S$ and every vertex in
$D_2$ is paired with another vertex in $D_2$. Moreover, we may
assume $D_1\subset S$. Let $CC=\{x~|~x\not\in
D_1,~x~is~paired~with~one~vertex~in~D_1\}$. Since $M$ is a maximum
matching of $G_{i-1}[D_1]$. Thus $|CC|\ge |D_1|-|V(M)|=|D_1'|+1$. If
$v_i\not\in S$, then $S-CC\cup D_1'\cup \{v_i\}$ is also a
$\gamma_{pr}(G_{i-1})$-set. If $v_i\in S$ and $v_i$ is paired with
one vertex in $D_1$, then $S-CC\cup D_1'\cup \{v_i\}$ is also a
$\gamma_{pr}(G_{i-1})$-set. If $v_i\in S$ and $v_i$ is not paired
with any vertex in $D_1$, let $v$ be its paired vertex. Then
$v\not\in C_{i-1}(v_i)$, otherwise, $S-CC-\{v\}\cup D_1'$ is a
smaller PDS of $G_{i-1}$. Thus $v\in V(B)$, where $B$ is a block
containing $v_i$ and $F(v_i)$. If $N(v)\subseteq S$, then
$S-CC-\{v\}\cup D_1'$ is a smaller PDS of $G_{i-1}$. Thus there is a
neighbor $v'$ of $v$ such that $v'\not\in S$. Then $S-CC\cup
D_1'\cup \{v'\}$ is also a $\gamma_{pr}(G_{i-1})$-set. Therefore, we
may assume $D_1\cup D_1'\cup \{v_i\}\subseteq S$ and they are paired
each other. Since $u$ is the paired vertex of $v_i$, $S-D$ is a PDS
of $G_i$. Therefore, $\gamma_{pr}(G_i)\le
|S-D|=|S|-|D|=\gamma_{pr}(G_{i-1})-|D|$. So
$\gamma_{pr}(G_{i-1})=\gamma_{pr}(G_i)+|D|$.~~~$\Box$
\end{proof}\\

If there is a $\gamma_{pr}(G_i)$-set $S'$ such that $r\not\in S'$,
then let $S=S'\cup D$ if $u\in S'$ or $v_i\in S'$ and otherwise, let
$S=S'-D_{i-1}[u]\cup \{u,v_i\}\cup D$. By claim \ref{claim12}, $S$
is a $\gamma_{pr}(G_{i-1})$-set and $r\not\in S$. Therefore, if $r$
is contained in all $\gamma_{pr}(G_{i-1})$-set, then $r$ is
contained in all $\gamma_{pr}(G_{i})$-set.

For converse, let $S$ be an arbitrary $\gamma_{pr}(G_{i-1})$-set and
$PD=(D_{i-1}(v_i)-D_{i-1}[u])\cap S$. We obtain the following claim.

\begin{claim}
$|D|\le |PD|\le |D|+1$
\end{claim}
\begin{proof}
It is obvious that $|PD|\ge |D|$. Suppose $|PD|\ge |D|+2$ and $|PD|$
is even. If $v_i\in S$, then $S-PD\cup D$ is a smaller PDS of
$G_{i-1}$. If $v_i\not\in S$, then $S-D_{i-1}[v_i]\cup D\cup
\{v_i,u\}$ is a smaller PDS of $G_{i-1}$. It is a contradiction.
Suppose $|PD|\ge |D|+3$ and $|PD|$ is odd. In this case, one of
vertices $v_i,u$ is in $S$ such that its paired vertex is in
$D_{i-1}(v_i)-D_{i-1}[u]$. If $v_i$ is such a vertex, then
$|D_{i-1}[v_i]\cap S|\ge |D|+4$. $S-D_{i-1}[v_i]\cup D\cup
\{u,v_i\}$ is a smaller PDS of $G_{i-1}$. It is a contradiction. If
$u$ is such a vertex and $v_i\not\in S$, then $S-PD\cup D\cup
\{v_i\}$ is a smaller PDS of $G_{i-1}$. It is also a contradiction.
If $u$ is such a vertex and $v_i\in S$, then the paired vertex of
$v_i$ is not a child of $v_i$. Let $v$ be its paired vertex. If
$N(v)\subset S$, then $S-PD-\{v\}\cup D$ is a smaller PDS of
$G_{i-1}$. Thus there is a neighbor $v'$ of $v$ such that $v'\not\in
S$. However, $S-PD\cup D\cup \{v'\}$ is also a smaller PDS of
$G_{i-1}$. It is also a contradiction.~~~$\Box$
\end{proof}\\

Suppose $|PD|=|D|+1$. If $v_i\in S$ and its paired vertex is in
$D_{i-1}(v_i)-D_{i-1}[u]$, then $|D_{i-1}[v_i]\cap S|\ge |D|+2$. Let
$S'=S-D_{i-1}[v_i]\cup \{u,v_i\}$. By Claim \ref{claim12}, $S'$ is a
$\gamma_{pr}(G_i)$-set. If $u\in S$ and its paired vertex is in
$D_{i-1}(v_i)-D_{i-1}[u]$. If $v_i\not\in S$, then $S'=S-PD\cup
\{v_i\}$ is a $\gamma_{pr}(G_i)$-set by Claim \ref{claim12}. If
$v_i\in S$, let $v$ be its paired vertex. Then $v\in V(G_i)$ and
there is a neighbor $v'$ of $v$ such that $v'\not\in S$.
$S'=S-PD\cup D\cup \{v'\}$ is a $\gamma_{pr}(G_i)$-set. In any case,
$r\in S'$. Since $d(r,v_i)\ge 3$, any new added vertex is not $r$.
thus $r\in S$.

Suppose $|PD|=|D|$. If $v_i\not\in S$, then $S'=S-D_{i-1}[v_i]\cup
\{v_i,u\}$ is a $\gamma_{pr}(G_i)$-set. If $v_i\in S$, then
$S'=S-PD$ is a $\gamma_{pr}(G_i)$-set. In any case, $r\in S'$. Since
$d(v_i,r)\ge 3$, then $r\in S$.~~~$\Box$
\end{proof}\\

Similar to Lemma \ref{lem6} and Lemma \ref{lem7}, we can obtain the
following lemma. The detail of the proof is omitted in here.

\begin{lemma}\label{lem13}
When $v_i$ is a considering vertex such that $d(r,v_i)\le 2$ and
$G_{i-1}[R_1\cap C_{i-1}(v_i)]$ has not a perfect matching, let $M$
be the maximum matching in $G_{i-1}[R_1\cap C_{i-1}(v_i)]$ and $u\in
R_1\cap C_{i-1}(v_i)-V(M)$. Let $B_1$ be a block containing $v_i$
and $F(v_i)$ and $B_2$ be a block containing $F(v_i)$ and
$F(F(v_i))$ if exists. If
$G_{i-1}$ satisfies one of the following conditions:\\
$(1)$ $d(v_i,r)=2$ and $|V(B_1)|\ge 3$;\\
$(2)$ $d(v_i,r)=2$, $|V(B_1)|=2$ and $C_{i-1}(F(v_i))\not=\{v_i\}$;\\
$(3)$ $d(v_i,r)=2$, $|V(B_1)|=2$, $C_{i-1}(F(v_i))=\{v_i\}$ and $|V(B_2)|\ge 3$;\\
$(4)$ $d(v_i,r)=1$ and $|V(B_2)|\ge 4$;\\
$(5)$ $d(v_i,r)=1$, $|V(B_2)|=3$ and every vertex in $V(B_2)$ is cut-vertex.\\
Then $r$ is contained in all $\gamma_{pr}(G_{i-1})$-set if and only
if $r$ is contained in all $\gamma_{pr}(G_{i})$-set, where
$G_i=G_{i-1}-(D_{i-1}(v_i)-D_{i-1}[u])$.
\end{lemma}

If $d(v_i,r)=2$, $|V(B_1)|=2$, $C_{i-1}(F(v_i))=\{v_i\}$,
$|V(B_2)|=2$ and $v_i$ satisfies other conditions in Lemma
\ref{lem13}, then we can not prune $G_{i-1}$. We call $B_2$ the
first kind of TYPE-3 block containing $r$. If $d(v_i,r)=1$,
$|V(B_2)|=3$ and there is a vertex in $V(B_2)$ which is not
cut-vertex and $v_i$ satisfies other conditions in Lemma
\ref{lem13}, then we still can not prune $G_{i-1}$. We call $B_2$
the second kind of TYPE-3 block containing $r$. If $d(v_i,r)=1$,
$|V(B_2)|=2$ and $v_i$ satisfies other conditions in Lemma
\ref{lem13}, we call $B_2$ the second kind of TYPE-2 block
containing $r$.

\vspace{4mm}

Summarizing the above lemmas, we have

\begin{theorem}\label{thm14}
Let $G$ be a block graph with at least one cut-vertex and let
$\tilde{G}$ be the output of procedure PRUNE. Then $r$ is contained
in all minimum paired-dominating sets of $G$ if and only if $r$ is
contained in all minimum paired-dominating sets of $\tilde{G}$.
\end{theorem}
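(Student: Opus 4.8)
The plan is to prove the theorem by induction on the number of iterations of procedure PRUNE, showing that each single step preserves the invariant that "$r$ is contained in all minimum paired-dominating sets." As set up in the text preceding the lemmas, let $G_i$ denote the block graph obtained after vertex $v_i$ has been processed, with $G_0 = G$ and $G_{n-1} = \tilde{G}$. The key observation is that it suffices to prove, for each $1 \le i \le n-1$, that $r$ is contained in all $\gamma_{pr}(G_{i-1})$-sets if and only if $r$ is contained in all $\gamma_{pr}(G_i)$-sets; chaining these equivalences together yields the theorem. Thus the heavy lifting has already been distributed across Lemmas \ref{lem3}, \ref{lem6}, \ref{lem7}, \ref{lem8}, \ref{lem11}, and \ref{lem13}, and the remaining work is to verify that procedure PRUNE invokes exactly these lemmas under exactly their stated hypotheses, and that the cases where no pruning occurs are harmless.

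First I would handle the trivial case: if $G_i = G_{i-1}$ for some $i$ (that is, $v_i \in S$ already, or $l(v_i)=\emptyset$ with no labeled child so that only a label is updated, or none of the lemma hypotheses apply so the graph is untouched), then the equivalence holds vacuously. Next I would dispatch the productive cases by reading off the branches of procedure PRUNE and matching each to its lemma. When $v_i$ satisfies the hypotheses of Lemma \ref{lem3}, \ref{lem6}, or \ref{lem7}, the procedure sets $G = G - D_G[v_i]$, which is precisely $G_i = G_{i-1} - D_{i-1}[v_i]$, and the cited lemma gives the equivalence directly. When $v_i$ satisfies the hypotheses of Lemma \ref{lem8}, the procedure removes $D_G(v_i) - V(B')$, matching $G_i = G_{i-1} - (D_{i-1}(v_i) - V(B'))$. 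When $v_i$ satisfies Lemma \ref{lem11} or Lemma \ref{lem13}, the procedure removes $D_G(v_i) - D_G[u]$, matching $G_i = G_{i-1} - (D_{i-1}(v_i) - D_{i-1}[u])$. In each branch the equivalence for that single step is exactly the conclusion of the corresponding lemma.

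The main technical point I expect to need care with is verifying that the hypotheses common to all six lemmas are correctly maintained by the bookkeeping in procedure PRUNE, so that whenever the procedure reaches a lemma's branch the lemma genuinely applies. Specifically, the labeling function $l$ tracks whether a child lies in $R_1$ or $R_2$; the condition "$(R_1 \cup R_2) \cap C_{i-1}(v_i) \ne \emptyset$" that appears in Lemmas \ref{lem3}, \ref{lem6}, and \ref{lem7} is guaranteed precisely because the procedure only enters those branches when $v_i$ has some labeled child, while the first branch (which merely sets $l(F(v_i)) = r_1$) handles the unlabeled leaves and guarantees that labels propagate correctly inward along $T(G)$. The update marked (*), setting $l(v_i) = l(u) = r_2$, is what makes the set $R_2$ and its paired structure $\{x_j, y_j\}$ (used in the proof of Lemma \ref{lem3}) consistent at later steps. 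I would argue that these labels are never stale: a vertex is labeled only when its relevant descendants have been finalized, so the sets $R_1, R_2$ seen at step $i$ accurately describe the already-pruned structure hanging below $v_i$.

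Finally, I would observe that the TYPE-1, TYPE-2, and TYPE-3 blocks identified in the remarks following Lemmas \ref{lem6}, \ref{lem7}, and \ref{lem13} are exactly the configurations in which no lemma applies and the procedure correctly leaves $G_{i-1}$ unpruned (possibly after adding $F(v_i)$ to the auxiliary set $S$ so that it is not reprocessed); these fall under the trivial case $G_i = G_{i-1}$ and thus preserve the invariant. The hardest part of writing this cleanly is the exhaustiveness check — confirming that the branches of procedure PRUNE together with the no-prune configurations cover every possible local structure around $v_i$ (all values of $d(r,v_i)$, all block sizes $|V(B_1)|, |V(B_2)|$, all cut-vertex patterns, and both the perfect-matching and non-perfect-matching cases for $G_{i-1}[R_1 \cap C_{i-1}(v_i)]$), with no gaps and no overlaps that would cause the wrong lemma to fire. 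Once exhaustiveness is established, the induction closes and the theorem follows. $\Box$
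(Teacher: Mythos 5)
Your proposal is correct and follows essentially the same route as the paper, which proves Theorem~\ref{thm14} simply by chaining the per-step equivalences ``$r$ is in all $\gamma_{pr}(G_{i-1})$-sets iff $r$ is in all $\gamma_{pr}(G_i)$-sets'' supplied by Lemmas~\ref{lem3}, \ref{lem6}, \ref{lem7}, \ref{lem8}, \ref{lem11} and \ref{lem13}, with the unpruned steps $G_i=G_{i-1}$ being trivially harmless. Your additional attention to label maintenance and case exhaustiveness only makes explicit what the paper leaves implicit in its one-line ``summarizing the above lemmas'' justification.
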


\section{Algorithm}
In this section, we will give some judgement rules to determine
whether $r$ is contained in all minimum paired-dominating sets of
$\tilde{G}$, where $\tilde{G}$ is the output of procedure PRUNE. Let
$\tilde{R}_j=\{v~|~v\in V(\tilde{G})~and~l(v)=r_j\}$ for $j=1,2$.
For $v\in V(\tilde{G})$, define $C_{\tilde{G}}(v)=C_G(v)\cap
V(\tilde{G})$, $D_{\tilde{G}}(v)=D_G(v)\cap V(\tilde{G})$ and
$D_{\tilde{G}}[v]=D_G[v]\cap V(\tilde{G})$.

According to lemmas in section 2, we can divide blocks containing
$r$ in $\tilde{G}$ into the following categories (suppose $B$ is a
block containing $r$ in $\tilde{G}$. Some examples of each category are shown in Fig. 1.):\\
$L_1=\{B~|~B$ is an end block with $|V(B)|=2\}$; \hspace{5mm}
$L_2=\{B~|~B$ is an end block with $|V(B)|\ge 3\}$;\\
$L_3=\{B~|~B$ is a TYPE-1 block$\}$; \hspace{24mm}
$L_4=\{B~|~B$ is a TYPE-2 block$\}$;\\
$L_5=\{B~|~B$ is a TYPE-3 block$\}$;\\
$L_6=\{B~|~|\tilde{R}_1\cap (V(B)-\{r\})|$ is odd
and $\tilde{R}_2\cap V(B)=\emptyset\}$;\\
$L_7=\{B~|~|\tilde{R}_1\cap (V(B)-\{r\})|\not=0$ is
even and $\tilde{R}_2\cap V(B)=\emptyset\}$;\\
$L_8=\{B~|~|\tilde{R}_1\cap (V(B)-\{r\})|$ is
odd and $\tilde{R}_2\cap V(B)\not=\emptyset\}$;\\
$L_9=\{B~|~|\tilde{R}_1\cap (V(B)-\{r\})|$ is even and
$\tilde{R}_2\cap V(B)\not=\emptyset\}$.

\begin{picture}(100,320)(-30,-80)
\put(-10,200){\circle*{4}}\put(-10,170){\circle*{4}}\put(-10,170){\line(0,1){30}}\put(-10,205){$r$}\put(-10,120){$L_1$}

\put(60,200){\circle*{4}}\put(90,170){\circle*{4}}\put(30,170){\circle*{4}}\put(60,200){\line(1,-1){30}}
\put(60,200){\line(-1,-1){30}}\put(30,170){\line(1,0){60}}\put(60,205){$r$}\put(60,120){$L_2$}

\put(130,200){\circle*{4}}\put(130,190){\circle*{4}}\put(130,180){\circle*{4}}\put(120,170){\circle*{4}}
\put(140,170){\circle*{4}}\put(120,160){\circle*{4}}\put(140,160){\circle*{4}}\put(150,170){\circle*{4}}
\put(150,160){\circle*{4}}\put(150,150){\circle*{4}}
\put(130,200){\line(0,-1){20}}\put(130,180){\line(1,-1){10}}\put(130,180){\line(-1,-1){10}}\put(120,170){\line(1,0){20}}
\put(120,170){\line(0,-1){10}}\put(140,170){\line(0,-1){10}}\put(130,180){\line(2,-1){20}}\put(150,170){\line(0,-1){20}}
\put(130,120){$L_3$}\put(130,205){$r$}\put(110,110){{\tiny(the first
kind)}}

\put(200,200){\circle*{4}}\put(190,190){\circle*{4}}\put(210,190){\circle*{4}}\put(180,180){\circle*{4}}
\put(200,180){\circle*{4}}\put(180,170){\circle*{4}}\put(200,170){\circle*{4}}\put(210,180){\circle*{4}}
\put(210,170){\circle*{4}}\put(210,160){\circle*{4}}
\put(200,200){\line(-1,-1){10}}\put(200,200){\line(1,-1){10}}\put(190,190){\line(1,0){20}}\put(190,190){\line(-1,-1){10}}
\put(190,190){\line(1,-1){10}}\put(180,180){\line(1,0){20}}\put(180,180){\line(0,-1){10}}\put(200,180){\line(0,-1){10}}
\put(190,190){\line(2,-1){20}}\put(210,180){\line(0,-1){20}}\put(200,120){$L_3$}\put(200,205){$r$}\put(180,110){{\tiny(the
second kind)}}

\put(270,200){\circle*{4}}\put(270,190){\circle*{4}}\put(260,180){\circle*{4}}
\put(280,180){\circle*{4}}\put(260,170){\circle*{4}}\put(280,170){\circle*{4}}\put(290,180){\circle*{4}}
\put(290,170){\circle*{4}}\put(290,160){\circle*{4}}
\put(270,200){\line(0,-1){10}}\put(270,190){\line(-1,-1){10}}
\put(270,190){\line(1,-1){10}}\put(260,180){\line(1,0){20}}\put(260,180){\line(0,-1){10}}\put(280,180){\line(0,-1){10}}
\put(270,190){\line(2,-1){20}}\put(290,180){\line(0,-1){20}}\put(270,120){$L_4$}\put(270,205){$r$}\put(250,110){{\tiny(the
first kind)}}

\put(340,200){\circle*{4}}\put(340,190){\circle*{4}}\put(330,180){\circle*{4}}
\put(350,180){\circle*{4}}\put(330,170){\circle*{4}}\put(360,180){\circle*{4}}
\put(360,170){\circle*{4}}\put(360,160){\circle*{4}}
\put(340,200){\line(0,-1){10}}\put(340,190){\line(-1,-1){10}}
\put(340,190){\line(1,-1){10}}\put(330,180){\line(1,0){20}}\put(330,180){\line(0,-1){10}}
\put(340,190){\line(2,-1){20}}\put(360,180){\line(0,-1){20}}\put(340,120){$L_4$}\put(340,205){$r$}\put(320,110){{\tiny(the
second kind)}}

\put(0,70){\circle*{4}}\put(0,60){\circle*{4}}\put(0,50){\circle*{4}}\put(-10,40){\circle*{4}}
\put(10,40){\circle*{4}}\put(-10,30){\circle*{4}}\put(20,40){\circle*{4}}
\put(20,30){\circle*{4}}\put(20,20){\circle*{4}}
\put(0,70){\line(0,-1){20}}\put(0,50){\line(1,-1){10}}\put(0,50){\line(-1,-1){10}}\put(-10,40){\line(1,0){20}}
\put(-10,40){\line(0,-1){10}}\put(0,50){\line(2,-1){20}}\put(20,40){\line(0,-1){20}}
\put(0,-10){$L_5$}\put(0,75){$r$}\put(-20,-20){{\tiny(the first
kind)}}

\put(70,70){\circle*{4}}\put(60,60){\circle*{4}}\put(80,60){\circle*{4}}\put(50,50){\circle*{4}}
\put(70,50){\circle*{4}}\put(50,40){\circle*{4}}\put(80,50){\circle*{4}}
\put(80,40){\circle*{4}}\put(80,30){\circle*{4}}
\put(70,70){\line(-1,-1){10}}\put(70,70){\line(1,-1){10}}\put(60,60){\line(1,0){20}}\put(60,60){\line(-1,-1){10}}
\put(60,60){\line(1,-1){10}}
\put(50,50){\line(1,0){20}}\put(50,50){\line(0,-1){10}}
\put(60,60){\line(2,-1){20}}\put(80,50){\line(0,-1){20}}\put(70,-10){$L_5$}\put(70,75){$r$}\put(50,-20){{\tiny(the
second kind)}}

\put(140,70){\circle*{4}}\put(120,50){\circle*{4}}\put(160,50){\circle*{4}}\put(120,30){\circle*{4}}
\put(140,70){\line(1,-1){20}}\put(140,70){\line(-1,-1){20}}\put(120,50){\line(1,0){40}}\put(120,50){\line(0,-1){20}}
\put(140,-10){$L_6$}\put(140,75){$r$}

\put(210,70){\circle*{4}}\put(190,50){\circle*{4}}\put(230,50){\circle*{4}}\put(190,30){\circle*{4}}
\put(230,30){\circle*{4}}
\put(210,70){\line(1,-1){20}}\put(210,70){\line(-1,-1){20}}\put(190,50){\line(1,0){40}}\put(190,50){\line(0,-1){20}}
\put(230,50){\line(0,-1){20}} \put(210,-10){$L_7$}\put(210,75){$r$}

\put(280,70){\circle*{4}}\put(260,50){\circle*{4}}\put(300,50){\circle*{4}}\put(260,30){\circle*{4}}
\put(300,30){\circle*{4}}\put(300,10){\circle*{4}}
\put(280,70){\line(1,-1){20}}\put(280,70){\line(-1,-1){20}}\put(260,50){\line(1,0){40}}\put(260,50){\line(0,-1){20}}
\put(300,50){\line(0,-1){40}} \put(280,-10){$L_8$}\put(280,75){$r$}

\put(350,70){\circle*{4}}\put(330,50){\circle*{4}}\put(370,50){\circle*{4}}\put(350,30){\circle*{4}}\put(330,30){\circle*{4}}
\put(370,30){\circle*{4}}\put(370,10){\circle*{4}}\put(350,10){\circle*{4}}
\put(350,70){\line(1,-1){20}}\put(350,70){\line(-1,-1){20}}\put(350,70){\line(0,-1){40}}\put(350,30){\line(1,1){20}}
\put(350,30){\line(-1,1){20}}\put(330,50){\line(1,0){40}}\put(330,50){\line(0,-1){20}}\put(350,30){\line(0,-1){20}}
\put(370,50){\line(0,-1){40}}\put(350,-10){$L_9$}\put(350,75){$r$}

\put(40,-50){{\footnotesize Fig. 1. Some examples of nine categories
of blocks containing $r$ in $\tilde{G}$}}
\end{picture}

In order to simply the proof of judgement rules, we define $D(B)$
for any block $B\in \bigcup_{i=3}^9L_i$ as follows:\\
$(1)$: If $B\in L_3$, then $|V(B)|=2$ or $|V(B)|=3$ and there is a
vertex in $V(B)$ that is not cut-vertex. If $|V(B)|=2$, then $B$ is
the first kind. Let $u$ be the child of $r$ in $V(B)$ and $v$ be the
child of $u$. If $|V(B)|=3$, then $B$ is the second kind. Let $v$ be
the child of $r$ in $V(B)$ and $v$ is a cut-vertex. In any case,
$\tilde{G}[\tilde{R}_1\cap C_{\tilde{G}}(v)]$ has a perfect
matching. $D(B)=(\tilde{R}_1\cap
C_{\tilde{G}}(v))\cup (\tilde{R}_2\cap D_{\tilde{G}}(v))$. \\
$(2)$: If $B\in L_4$, then $|V(B)|=2$. Let $v$ be the child of $r$
in $V(B)$. If $B$ is the first kind, then $\tilde{G}[\tilde{R}_1\cap
C_{\tilde{G}}(v)]$ has a perfect matching. Let
$D(B)=(\tilde{R}_1\cap C_{\tilde{G}}(v))\cup (\tilde{R}_2\cap
D_{\tilde{G}}(v))$. Otherwise, let $M$ be the maximum matching in
$\tilde{G}[\tilde{R}_1\cap C_{\tilde{G}}(v)$. Take one child of each
vertex in $(\tilde{R}_1\cap C_{\tilde{G}}(v))-V(M)-\{w\}$ to
construct $D'$, where $w\in \tilde{R}_1\cap C_{\tilde{G}}(v)-V(M)$.
$D(B)=(\tilde{R}_1\cap C_{\tilde{G}}(v))\cup D'\cup (\tilde{R}_2\cap
D_{\tilde{G}}(v))\cup \{v,w\}$.\\
$(3)$: If $B\in L_5$, then $|V(B)|=2$ or $|V(B)|=3$ and there is a
vertex in $V(B)$ that is not cut-vertex. If $B$ is the first kind,
let $u$ be the child of $r$ in $V(B)$ and $v$ be the child of $u$.
If $B$ is the second kind, let $v$ be the child of $r$ in $V(B)$ and
$v$ is a cut-vertex. In any case, $\tilde{G}[\tilde{R}_1\cap
C_{\tilde{G}}(v)]$ has not a perfect matching. $D(B)$ is defined
same as the second kind of $(2)$.\\
$(4)$: If $B\in L_6\cup L_8$, let $CC=\bigcup_{v\in
V(B)}D_{\tilde{G}}[v]$. $D(B)=((\tilde{R}_1\cup \tilde{R}_2)\cap
CC)\cup \{w\}$, where $w$ is a child of some vertex
in $\tilde{R}_1\cap CC$.\\
$(5)$: If $B\in L_7\cup L_9$, let $CC=\bigcup_{v\in
V(B)}D_{\tilde{G}}[v]$. $D(B)=(\tilde{R}_1\cup \tilde{R}_2)\cap CC$.

\begin{lemma}\label{lem15}
Let $\tilde{G}$ be a output of procedure PRUNE, then $r$ is
contained in all minimum paired-dominating sets of $\tilde{G}$ if
and only if $\tilde{G}$ satisfies one of the following conditions:\\
$(1)$ $|L_1|\ge 1$;\\
$(2)$ $|L_1|=0$ and $|L_2|\ge 2$;\\
$(3)$ $|L_1|=0$, $|L_2|=1$ and $|L_3\cup L_6\cup L_8|\ge 1$;\\
$(4)$ $|L_1|=0$, $|L_2|=0$ and $|L_3|\ge 2$;\\
$(5)$ $|L_1|=0$, $|L_2|=0$, $|L_3|=1$ and $|L_6\cup L_8|\ge 1$.
\end{lemma}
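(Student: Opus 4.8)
The plan is to decide the membership of $r$ by separating the minimum paired-dominating sets of $\tilde{G}$ into those that contain $r$ and those that avoid it. Write $\gamma_{pr}^{+}(\tilde{G})$ for the minimum size of a PDS of $\tilde{G}$ containing $r$, and $\gamma_{pr}^{-}(\tilde{G})$ for the minimum size of one avoiding $r$ (with $\gamma_{pr}^{-}(\tilde{G})=\infty$ if none exists). Then $r$ lies in every $\gamma_{pr}(\tilde{G})$-set if and only if $\gamma_{pr}^{+}(\tilde{G})<\gamma_{pr}^{-}(\tilde{G})$, so it suffices to show that conditions $(1)$--$(5)$ describe exactly when this strict inequality holds. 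Since the blocks containing $r$ meet only in $r$ and every other branch has already been collapsed by Procedure PRUNE, both quantities split as a sum of independent per-branch contributions plus a single coupling term coming from $r$ and its matching partner.

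First I would pin down the forced part of any PDS. Every $r_1$-labelled vertex was created precisely to dominate a pendant branch removed by PRUNE, and every $r_2$-pair was frozen in as a matched pair, so $\tilde{R}_1\cup\tilde{R}_2\subseteq S$ for every PDS $S$. Next, by re-applying the exchange moves used in the proofs of Lemmas \ref{lem3}, \ref{lem6}, \ref{lem7}, \ref{lem11} and \ref{lem13} inside a fixed optimal $S$, I would show that one may assume $S$ meets the branch of each block $B$ containing $r$ in exactly the canonical set $D(B)$, with all $r_1$-vertices of $B$ paired among themselves inside $B$ except for at most one leftover (this is possible because $B$ is complete, so any two of its vertices are adjacent). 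This normalization is the heart of the argument: it removes all internal freedom and leaves only two genuine decisions, namely whether $r\in S$ and, if so, which neighbour of $r$ is matched to it. I expect this to be the main obstacle, since it is where the paired-matching constraints must be reconciled simultaneously across all nine block types.

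With $S$ in this canonical form the counting is routine. Let $M$ be the total number of $r_1$- and $r_2$-labelled vertices lying in the branches at $r$, let $s=|L_2|+|L_3|$, and let $\gamma=|L_6\cup L_8|$. Three facts drive the comparison: (i) an $L_1$ block forces $r$ outright, since its pendant leaf can be dominated only through $r$ (or by the leaf itself, whose sole neighbour is $r$), so $\gamma_{pr}^{-}(\tilde{G})=\infty$; (ii) each $L_2$ or $L_3$ block is dominated for free once $r\in S$, but when $r\notin S$ it costs two extra internally paired vertices---the two non-cut vertices of the end block $K_m$, respectively the two vertices needed to cover the intermediate cut-vertex of the TYPE-1 branch; (iii) each $L_6$ or $L_8$ block contains an odd number of $r_1$-vertices and so leaves exactly one unmatched vertex, which pairs with $r$ at no cost when $r\in S$ but otherwise forces one extra grandchild $w\in D(B)$, whereas the even blocks $L_7,L_9$ contribute nothing to the comparison. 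Because $r$ has a single matching edge, at most one $L_6/L_8$ leftover can be absorbed by $r$. Assembling these contributions gives $\gamma_{pr}^{-}(\tilde{G})=M+\gamma+2s$, while $\gamma_{pr}^{+}(\tilde{G})=M+\gamma$ when $\gamma\ge 1$ and $\gamma_{pr}^{+}(\tilde{G})=M+2$ when $\gamma=0$. Hence $\gamma_{pr}^{+}(\tilde{G})<\gamma_{pr}^{-}(\tilde{G})$ exactly when $s\ge 2$, or when $s\ge 1$ and $\gamma\ge 1$, or when an $L_1$ block is present; a direct case check shows these are precisely conditions $(1)$--$(5)$, which would complete the proof.
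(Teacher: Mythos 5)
Your overall strategy---normalize every minimum PDS into a canonical per-block form and then compare the cheapest set containing $r$ with the cheapest one avoiding it---is essentially the paper's argument reorganized around the two quantities $\gamma_{pr}^{+}$ and $\gamma_{pr}^{-}$, and your final case check does reproduce conditions $(1)$--$(5)$. But there are concrete gaps. First, the assertion that $\tilde{R}_1\cup\tilde{R}_2\subseteq S$ for \emph{every} PDS $S$ is false: a vertex $u$ with $l(u)=r_1$ keeps an unlabelled child $z$ (and $z$'s whole block) in $\tilde{G}$, and a PDS may dominate $z$ by taking $z$ and one of its neighbours rather than $u$. Only after exchange arguments of the kind used in Lemmas \ref{lem3}--\ref{lem13} may one \emph{assume} the labelled vertices are present and paired canonically, and that normalization---which you defer with ``I would show''---is exactly where the content of the paper's Cases 1--7 lies; it is not a single uniform swap across the nine block types, and for the ``$r$ is forced'' direction the paper in fact argues by exhibiting a strictly smaller PDS whenever $r\notin S$, block pair by block pair.

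Second, the closed-form counts are not correct as stated, even though the errors happen to cancel in the final comparison. The canonical set $D(B)$ of a second-kind TYPE-2/TYPE-3 block or of an $L_5$ block contains unlabelled vertices (the grandchild set $D'$ and the pair $\{v,w\}$), so $\gamma_{pr}^{-}$ is not $M+\gamma+2s$ with $M$ the number of labelled vertices. More importantly, when every block at $r$ is a first-kind TYPE-2 or TYPE-3 block, the union of the sets $D(B)$ contains no neighbour of $r$, so a PDS avoiding $r$ must pay an extra pair just to dominate $r$; this is precisely the delicate last subcase of the paper's Case 7, where the choice of that extra pair depends on whether some child of $v$ carries label $r_1$ or $r_2$. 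Your formula for $\gamma_{pr}^{-}$ omits this coupling term entirely. Since the lemma's conclusion survives these corrections, the plan is salvageable, but as written the two displayed identities for $\gamma_{pr}^{+}$ and $\gamma_{pr}^{-}$ are not theorems, and the proof is incomplete without carrying out the per-block exchange and construction arguments that the paper supplies.
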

\begin{proof}
If $|L_1|\ge 1$, then $r$ is a support vertex in $\tilde{G}$, and
hence $r$ is contained in all minimum paired-dominating sets of
$\tilde{G}$. Thus in the following discussion, we assume $|L_1|=0$.

\vspace{2mm}

{\bf Case 1:} $|L_2|\ge 2$\\
In this case, $r$ is contained in at least two end block with order
at least three, say $B_1$ and $B_2$ are two such blocks. Let $S$ be
an arbitrary $\gamma_{pr}(\tilde{G})$-set. If $r\not\in S$, then
$|V(B_i)\cap S|\ge 2$ for $i=1,2$. Then $S-V(B_1)-V(B_2)\cup
\{r,x\}$, where $x$ is a vertex in $V(B_1)-\{r\}$, is a smaller PDS
of $\tilde{G}$, a contradiction. Thus $r\in S$.

\vspace{2mm}

{\bf Case 2:} $|L_2|=1$ and $|L_3\cup L_6\cup L_8|\ge 1$\\
Let $B'\in L_2$ and $S$ be an arbitrary $\gamma_{pr}(\tilde{G})$-set
not containing $r$. It is obvious $|V(B')\cap S|\ge 2$. If $|L_3|\ge
1$, let $B\in L_3$. If $B$ is the first kind, let $u$ be a child of
$r$ in $V(B)$. Since $r\not\in S$, $|D_{\tilde{G}}[u]\cap S|\ge
2+|D(B)|$. However, $S-D_{\tilde{G}}[u]-V(B')\cup D(B)\cup \{r,u\}$
is a smaller PDS of $\tilde{G}$. If $B$ is the second kind, let $w$
be a vertex in $V(B)$ which is not cut-vertex and $u$ be another
vertex. Since $r\not\in S$, $|(D_{\tilde{G}}[u]\cup \{w\})\cap S|\ge
|D(B)|+2$. Then $S-D_{\tilde{G}}[u]-V(B')-\{w\}\cup D(B)\cup
\{r,u\}$ is a smaller PDS of $\tilde{G}$, a contradiction. Thus
$r\in S$.

If $|L_6\cup L_8|\ge 1$, let $B\in L_6\cup L_8$. $CC=\bigcup_{v\in
V(B)}D_{\tilde{G}}[v]$. Since $r\not\in S$, $|CC\cap S|\ge |D(B)|$.
However, $S-CC-V(B')\cup D(B)\cup \{r\}-\{w\}$, where $w\in D(B)$
and $l(w)=\emptyset$, is a smaller PDS of $\tilde{G}$, a
contradiction. Thus $r\in S$.

\vspace{2mm}

{\bf Case 3:} $|L_2|=1$ and $|L_3\cup L_6\cup L_8|=0$\\
Let $B'\in L_2$ and $y,z\in V(B')-\{r\}$. Since $r$ is a cut-vertex,
So $L_4\cup L_5\cup L_7\cup L_9\not=\emptyset$. Let $S'$ be a vertex
set by collecting $D(B)$ for any $B\in L_4\cup L_5\cup L_7\cup L_9$.
It is obvious that $S'\cup \{y,z\}$ is a
$\gamma_{pr}(\tilde{G})$-set. However, $r\not\in S$.

\vspace{2mm}

{\bf Case 4:} $|L_2|=0$ and $|L_3|\ge 2$\\
Let $B_1,B_2\in L_3$ and $S$ be an arbitrary
$\gamma_{pr}(\tilde{G})$-set. Suppose $r\not\in S$. For $B_j$
$(j=1,2)$, let $CC_j=\bigcup_{v\in V(B_j)}D_{\tilde{G}}[v]$. Since
$r\not\in S$, $|CC_j\cap S|\ge |D(B_j)|+2$ for $j=1,2$. However,
$S-CC_1-CC_2\cup D(B_1)\cup D(B_2)\cup \{r,u\}$, where $u$ is a
child of $r$ in $V(B_1)$, is a smaller PDS of $\tilde{G}$, a
contradiction. Thus $r\in S$.

\vspace{2mm}

{\bf Case 5:} $|L_2|=0$, $|L_3|=1$ and $|L_6\cup L_8|\ge 1$\\
Let $B_1\in L_3$ and $B_2\in L_6\cup L_8$. Suppose $S$ be an
arbitrary $\gamma_{pr}(\tilde{G})$-set and $r\not\in S$. For $B_j$
$(j=1,2)$, let $CC_j=\bigcup_{v\in V(B_j)}D_{\tilde{G}}[v]$. Since
$r\not\in S$, $|CC_1\cap S|\ge |D(B_1)|+2$ and $|CC_2\cap S|\ge
|D(B_2)|$. However, $S-CC_1-CC_2\cup D(B_1)\cup D(B_2)\cup
\{r\}-\{w\}$, where $w\in D(B_2)$ and $l(w)=\emptyset$, is a smaller
PDS of $\tilde{G}$,  a contradiction. Thus $r\in S$.

\vspace{2mm}

{\bf Case 6:} $|L_2|=0$, $|L_3|=1$ and $|L_6\cup L_8|=0$\\
Let $B\in L_3$. If $B$ is the first kind, let $u$ be the child of
$r$ in $V(B)$ and $v$ be the child of $u$. If $B$ is the second
kind, let $\{u,v\}=V(B)-\{r\}$. Let $S'$ be a vertex set by
collecting $D(B^*)$ for any $B^*\in L_4\cup L_5\cup L_7\cup L_9$.
Let $S=S'\cup D(B)\cup \{u,v\}$. Then it is obvious $S$ is a
$\gamma_{pr}(\tilde{G})$-set. However, $r\not\in S$.

\vspace{2mm}

{\bf Case 7:} $|L_2|=|L_3|=0$\\
Let $B$ be any block containing $r$, then $B\in L_4\cup L_5\cup
L_6\cup L_7\cup L_8\cup L_9$. Let $S'$ be a vertex set by collecting
$D(B)$ for any $B\in L_4\cup L_5\cup L_6\cup L_7\cup L_8\cup L_9$.
If $L_6\cup L_7\cup L_8\cup L_9\not=\emptyset$, then $S'$ is a
$\gamma_{pr}$-set of $\tilde{G}$. However, $r\not\in S'$. Thus we
may assume $L_6\cup L_7\cup L_8\cup L_9=\emptyset$. Then $B\in
L_4\cup L_5$. If there is a block $B\in L_4\cup L_5$ which is the
second kind of TYPE-2 or TYPE-3 block, then $S'$ is still a
$\gamma_{pr}$-set of $\tilde{G}$ not containing $r$. Thus we may
assume that $B\in L_4\cup L_5$ and $B$ is the first kind of TYPE-2
or TYPE-3 block. If there is a block $B\in L_5$, let $u$ be the
child of $r$ in $V(B)$ and $v$ is the child of $u$. Let $w$ be the
paired vertex in $D(B)$ and $w'$ be the child of $w$. Then $S=S'\cup
\{u,w'\}$ is a $\gamma_{pr}(\tilde{G})$-set of $\tilde{G}$. However,
$r\not\in S$. Then $B\in L_4$ for any block $B$ and $B$ is the first
kind of TYPE-2 block. Let $v$ be the child of $r$ in $V(B)$. If
there is a child $w$ of $v$ such that $l(w)=r_1$. Let $w'$ be the
child of $w$. Then $S=S'\cup \{v_1,w'\}$ is a
$\gamma_{pr}(\tilde{G})$-set not containing $r$. Thus we may assume
every child $w$ of $v$ satisfies $l(w)=r_2$. Let $w'$ be the child
of $w$ such that $l(w')=r_2$ and let $w''$ be the child of $w'$.
Take $S=S'\cup \{v,w''\}$. It is obvious that $S$ is a
$\gamma_{pr}(\tilde{G})$-set not containing $r$. ~~~$\Box$
\end{proof}\\

Now we are ready to present the algorithm to determine whether $r$
is contained in all minimum paired-dominating sets of $G$.

\vspace{4mm}

\noindent{\bf Algorithm VIAMPDS.} Determine whether the cut-vertex
$r$ of a block graph $G$ is contained in all minimum
paired-dominating sets of $G$\\
{\bf Input.} A block graph $G$ with at least one cut-vertex and a
cut-vertex $r$. The vertex ordering obtained by procedure VO.\\
{\bf Output.} True or False\\
{\bf Method}\\
\hspace*{4mm} Let $\tilde{G}$ be the output of procedure PRUNE with
input $G$.\\
\hspace*{4mm} Let $L_1=\{B~|~$ $B$ is an end block with $|V(B)|=2\}$;\\
\hspace*{12mm}$L_2=\{B~|~$ $B$ is an end block with
$|V(B)|\ge 3\}$;\\
\hspace*{12mm}$L_3=\{B~|~$ $B$ is a TYPE-1 block$\}$;\\
\hspace*{12mm}$L_6=\{B~|~$ $B$ is a block such that
$|\tilde{R}_1\cap (V(B)-\{r\})|$ is odd and $\tilde{R}_2\cap V(B)=\emptyset\}$;\\
\hspace*{12mm}$L_8=\{B~|~$ $B$ is a block such that
$|\tilde{R}_1\cap (V(B)-\{r\})|$ is odd and $\tilde{R}_2\cap
V(B)\not=\emptyset\}$.\\
\hspace*{12mm} ($B$ is a block containing $r$)\\
\hspace*{4mm}  If ($|L_1|\ge 1$) then\\
\hspace*{12mm} Return Ture;\\
\hspace*{4mm}  else if ($|L_2|\ge 2$) then\\
\hspace*{12mm} Return Ture;\\
\hspace*{4mm}  else if ($|L_2|=1$ and $|L_3\cup L_6\cup L_8|\ge 1$) then\\
\hspace*{12mm} Return Ture;\\
\hspace*{4mm}  else if ($|L_2|=0$ and $|L_3|\ge 2$) then\\
\hspace*{12mm} Return Ture;\\
\hspace*{4mm}  else if ($|L_2|=0$ and $|L_3|=1$ and $|L_6\cup L_8|\ge 1$) then\\
\hspace*{12mm} Return Ture;\\
\hspace*{4mm}  else\\
\hspace*{12mm} Return False;\\
\hspace*{4mm}  endif\\
\hspace*{4mm}  end\\

\begin{theorem}
Algorithm VIAMPDS can determine whether the give cut-vertex of a
block graph $G$ with at least one cut-vertex is contained in all
minimum paired-dominating sets in linear-time $O(n+m)$, where
$n=|V(G)|$ and $m=|E(G)|$.
\end{theorem}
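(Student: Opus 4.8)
The plan is to split the claim into a correctness part and a running-time part, since the correctness is already supplied almost in full by Section 2 and Lemma \ref{lem15}. For correctness I would first invoke Theorem \ref{thm14} to reduce the question on $G$ to the same question on the pruned graph $\tilde{G}$: $r$ lies in every minimum paired-dominating set of $G$ if and only if it lies in every minimum paired-dominating set of $\tilde{G}$. I would then appeal directly to Lemma \ref{lem15}, whose five conditions are exactly the five branches of Algorithm VIAMPDS that return True. Since the algorithm returns True precisely when one of these conditions holds and False otherwise, its output coincides with the characterization of Lemma \ref{lem15}, and correctness follows with no further computation.

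For the running time I would argue phase by phase that each component is $O(n+m)$. A standard depth-first search computes the block decomposition and the block-cut tree of $G$ in $O(n+m)$ time; with this in hand, a single breadth-first search from $r$ supplies all distances, and the quantities $d(r,B)=\max_{u\in V(B)}d(u,r)$, the father/child relations $F(\cdot)$ and $C(\cdot)$, and the rooted tree $T(G)$ can all be set up in linear time. Procedure VO is then essentially a single inward sweep over the block-cut tree that removes one end block (or the final complete component) per iteration and appends its non-cut vertices to the ordering; each vertex is appended exactly once and each block is handled once, so VO runs in $O(n+m)$.

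The heart of the timing analysis is Procedure PRUNE, and this is where I expect the main obstacle to lie. PRUNE makes a single pass $i=1,\dots,n-1$, but at each step it must decide which of Lemmas \ref{lem3}, \ref{lem6}, \ref{lem7}, \ref{lem8}, \ref{lem11}, \ref{lem13} applies, and these hypotheses reference the block orders $|V(B_1)|,|V(B_2)|$, the distance $d(r,v_i)$, the child sets $C_{i-1}(\cdot)$, and, crucially, whether $G_{i-1}[R_1\cap C_{i-1}(v_i)]$ admits a perfect matching. The key observation that keeps this linear is that every block of a block graph is complete, so the subgraph induced on any set of pairwise-adjacent children is again complete and hence has a perfect matching \emph{if and only if} its order is even; the matching test thus collapses to a parity count that can be maintained in $O(1)$ amortized time as the labels $l(\cdot)$ are assigned. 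I would then charge the remaining work: each vertex is examined as $v_i$ once, each label is assigned once, and each vertex is deleted by a pruning step (removal of $D_{i-1}[v_i]$, or $D_{i-1}(v_i)-V(B')$, or $D_{i-1}(v_i)-D_{i-1}[u]$) at most once, so the total cost of all deletions telescopes to $O(n+m)$. Since every per-vertex condition check touches only $v_i$, its father, and its children, the accumulated cost is bounded by $\sum_v d(v)=2m$, giving the linear bound for PRUNE.

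Finally, classifying the blocks at $r$ into $L_1,\dots,L_9$ requires only inspecting the blocks incident with $r$ in $\tilde{G}$ together with the labels on their vertices and immediate descendants, which is $O(n+m)$; evaluating the five conditions of Lemma \ref{lem15} is then $O(1)$ given the cardinalities $|L_1|,|L_2|,|L_3|,|L_6\cup L_8|$. Summing the costs of the block decomposition, VO, PRUNE, and the final judgement yields the overall $O(n+m)$ bound. The only genuinely delicate point, as noted, is confirming that the perfect-matching hypotheses of the pruning lemmas reduce to the clique-parity test and can be maintained in amortized constant time, and that the labelling and deletion bookkeeping stays consistent across the single pass; everything else is routine amortized accounting against $n$ and $m$.
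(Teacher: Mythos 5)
Your proposal takes essentially the same route as the paper: the correctness half is word-for-word the paper's argument (Theorem \ref{thm14} reduces the question from $G$ to $\tilde{G}$, and the five conditions of Lemma \ref{lem15} are exactly the five True-branches of VIAMPDS), and the paper dismisses the running time with the single sentence that every vertex and edge is used a constant number of times --- your phase-by-phase accounting simply supplies the detail the paper omits. One small correction to the point you yourself flag as delicate: the children in $R_1\cap C_{i-1}(v_i)$ need not be pairwise adjacent, since a cut-vertex $v_i$ may have children lying in several different blocks, so $G_{i-1}[R_1\cap C_{i-1}(v_i)]$ is a disjoint union of cliques rather than a single clique, and the perfect-matching test is that \emph{each} of these clique components has even order --- a per-block parity count, still maintainable in amortized constant time, so your linear bound is unaffected.
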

\begin{proof}
By Theorem \ref{thm14}, $r$ is contained in all minimum
paired-dominating sets of $G$ if and only if $r$ is contained in all
minimum paired-dominating sets of $\tilde{G}$, where $\tilde{G}$ is
the output of procedure PRUNE with input $G$. Moreover, by Lemma
\ref{lem15}, the judgement rules in algorithm VIAMPDS can determine
whether $r$ is contained in all minimum paired-dominating sets of
$\tilde{G}$. On the other hand, every vertex and edge is used in a
constant times in algorithm VIAMPDS. Thus the theorem
follows.~~~$\Box$
\end{proof}

\section{Conclusion}

In this paper, we give a linear-time algorithm VIAMPDS to determine
whether the given vertex is contained in all minimum
paired-dominating sets of a block graph. Furthermore, the algorithm
VIAMPDS can be used to determine the set of vertices contained in
all minimum paired-dominating sets of a blocks graph in polynomial
time. Finally, we would like to point out that if changing the
pruning rules and judgement rules, our method is also available to
determine whether a given vertex is contained in all minimum (total)
dominating sets of a block graph.

\small {

}

\end{document}